\numberwithin{equation}{section} 
\newcommand{\udef}{\mathrel{\mathop:}=}
\newcommand{\R}{\mathbb{R}}
\newcommand{\N}{\mathbb{N}}
\newcommand{\de}{\mathrm{d}}
\theoremstyle{plain}
\newtheorem{thm}{Theorem}[section]
\newtheorem{lem}[thm]{Lemma}
\newtheorem{rmk}[thm]{Remark}
\newtheorem{rem}[thm]{Remark}
\title{A Randomized Runge-Kutta Method for time-irregular delay differential equations}
\author[F.V. Difonzo]{Fabio~V. Difonzo}
\address{Istituto per le Applicazioni del Calcolo \textquotedblleft Mauro Picone\textquotedblright, Consiglio Nazionale delle Ricerche, Via G. Amendola 122/I, 70126 Bari, Italy}
\email{fabiovito.difonzo@cnr.it}
\author[P. Przyby{\l}owicz]{Pawe{\l} Przyby{\l}owicz}
\address{AGH University of Krakow,
Faculty of Applied Mathematics,
Al. A.~Mickiewicza 30, 30-059 Krak\'ow, Poland}
\email{pprzybyl@agh.edu.pl}
\author[Y. Wu]{Yue Wu}
\address{Department of Mathematics and Statistics, University of Strathclyde, Glasgow, UK}
\email{yue.wu@strath.ac.uk, corresponding author}
\author[X. Xie]{Xinheng Xie}
\address{Department of Mathematics and Statistics, University of Strathclyde, Glasgow, UK}
\email{xinheng.xie@strath.ac.uk}
\begin{document}
\maketitle

\begin{abstract}
{In this paper we investigate the existence, uniqueness and approximation of solutions of delay differential equations (DDEs) with the right-hand side functions $f=f(t,x,z)$ that are Lipschitz continuous with respect to $x$ but only H\"older continuous with respect to $(t,z)$. We give a construction of the  randomized two-stage Runge-Kutta scheme for DDEs and investigate its upper error bound in the $L^p(\Omega)$-norm for $p\in [2,+\infty)$. Finally, we report on results of numerical experiments.}
\end{abstract}

\section{Introduction}

We deal with approximation  of solutions to the following delay differential equations (DDEs)
{
\begin{equation}
\label{eq:DiscDDE2}
\begin{cases}
x'(t)=f(t, x(t),x(t-\tau)), & t\in [0,(n+1)\tau], \\
x(t)=\varphi(t), & t\in[-\tau,0),
\end{cases}
\end{equation}}
with a constant time-lag $\tau\in(0,+\infty)$, a fixed time horizon $n\in\mathbb{N}$, a right-hand side function  $f:[0,(n+1)\tau]\times\R^d\times\R^d\mapsto\R^d$, and {initial-value function: $\varphi(t): [-\tau,0)\mapsto \R^d$}. 

We assume that the function $f=f(t,x,z)$ is integrable with respect to $t$ and (at least) continuous with respect to $(x,z)$.

{Building upon the concepts presented in \cite{difonzo2024existence}, our objective in this paper is to introduce a Randomized Runge-Kutta scheme tailored specifically for time-irregular delay differential equations. This novel scheme differentiates itself from existing methods, including those analyzed in the literature. 

The motivation behind exploring these Delay Differential Equations (DDEs) is multifaceted. One aspect stems from the need to model switching systems with memory, as expounded upon in \cite{hale1977theory} and \cite{Hale1993IntroductionTF}. Another facet is rooted in practical engineering applications, as evidenced by discussions in \cite{CZPMPP} and \cite{Eng_DDEs_1}. For instance, in scenarios like the phase change of metallic materials, a DDE becomes crucial due to the time delay in response to changes in processing conditions. 

Additionally, inspiration is drawn from delayed differential equations involving rough paths, as represented by the form:
\begin{equation}\label{eq:DiscDDE21}
\begin{cases}
\de U(t)=a(U(t),U(t-\tau))+\de Z(t), & t\in [0,(n+1)\tau], \\
U(t)=U_{0}, & t\in[-\tau,0),
\end{cases}
\end{equation}

Here, $Z$ symbolizes an integrable perturbation, potentially of stochastic nature, with paths that might exhibit discontinuities. Consequently, if $x(t)=U(t)-Z(t)$, it satisfies the (possibly random) DDE \eqref{eq:DiscDDE2} with $f(t,x,z)=a(x+Z(t),z+Z(t-\tau))$, and $x(t)=U_0$, assuming $Z(t)=0$ for $t\in [-\tau,0]$. In this context, the function $f$ inherits from $Z$ its low smoothness concerning the variable $t$. Noteworthy is the fact that equation \eqref{eq:DiscDDE21} is a generalization of an ODE with rough paths discussed in \cite{RKYW2017}.

When classical assumptions, such as $C^r$-regularity of $f=f(t,x,z)$ concerning all variables $t,x,z$, are imposed on the right-hand side function, errors for deterministic schemes have been documented in \cite{bellen1}. Furthermore, in \cite{CZPMPP}, the error of the Euler scheme has been explored for a certain class of nonlinear DDEs under nonstandard assumptions like the one-side Lipschitz condition and local H\"older continuity. However, limited knowledge exists regarding the approximation of solutions for DDEs with less regular Carath\'eodory right-hand side functions.

For Carath\'eodory ODEs, deterministic algorithms encounter convergence issues, necessitating the use of randomized algorithms. In this context, we propose a randomized version of the Runge-Kutta scheme tailored for DDEs of the form \eqref{eq:DiscDDE2}.\\
In the landscape of existing literature, several relevant contributions are worth noting. \cite{Eulalia2001} provides an analysis of multistep Runge-Kutta methods for delay differential equations, while \cite{Kuehn2004} offers comprehensive insights into numerical methods for delay differential equations. The work by \cite{Brunner2012} introduces a general-purpose implicit-explicit Runge-Kutta integrator for delay differential equations, and \cite{Song2015} proposes a novel explicit two-stage Runge-Kutta method for delay differential equations with constant delay. Each of these works contributes valuable perspectives to the broader understanding of numerical methods for DDEs. 

Despite the extensive exploration of randomized algorithms for ODEs in the literature (see, for instance, \cite{bochacik2}, \cite{BGMP2021}, \cite{daun1}, \cite{hein_milla1}, \cite{jen_neuen1}, \cite{BK2006}, \cite{RKYW2017}), to the best of our knowledge, this paper represents a pioneering effort in defining a randomized Runge-Kutta scheme and rigorously investigating its error for Carath\'{e}odory-type DDEs.
}

{The structure of the paper is as follows. In section 2 we give basic notions, definitions, and provide detailed construction and output of the randomized two-stage Runge-Kutta method. Section 3 is devoted to properties of solutions to CDDEs under assumptions stated in Section 2. Section 4 contains detailed error analysis of the randomized R-K method for CDDEs. Finally, details of the Python implementation  together with extensive numerical experiments are reported in Section 5.}

\section{Preliminaries}

 By $|\cdot|$ we mean the Euclidean norm in $\R^d$. We consider a  complete probability space $(\Omega,\Sigma,\mathbb{P})$. For a random variable $X:\Omega\to\mathbb{R}$ we denote by $\|X\|_p=(\mathbb{E}|X|^p)^{1/p}$, where $p\in [2,+\infty)$.

Let us fix the {\it horizon parameter} $n\in\mathbb{N}$. On the right-hand side function $f$ { and initial-value function: $\varphi(t)$,} we impose the following assumptions:


\begin{enumerate}[label=\textbf{(A\arabic*)},ref=(A\arabic*)]
       \item\label{ass:A1} $f(t,\cdot,\cdot)\in C(\R^d\times\R^d;\R^d)$ for all $t\in [0,(n+1)\tau]$ { and
     $\varphi\in C([-\tau,0);\mathbb{R}^d)$ for $t\in [-\tau, 0]$.}
    \item\label{ass:A2} $f(\cdot,x,z)$ is Borel measurable for all $(x,z)\in \R^d\times\R^d$,
    \item\label{ass:A3}
    {There exists $K\in (0,\infty)$ such that for all $t\in [0,(n+1)\tau]$, $x, z\in\R^d$}
    \begin{equation}
     \label{assumpt_a3}
       { |f(t,x,z)|\leq K (1+|x|)(1+|z|),}
    \end{equation}
    
    \item\label{ass:A4}
    There exists $L\in (0,\infty)$ such that for all $t\in [0,(n+1)\tau]$, $x_1,x_2, z\in\R^d$     
    \begin{equation}
    \label{assumpt_a4}
        {|f(t,x_1,z)-f(t,x_2,z)|\leq L (1+|z|)|x_1-x_2|,}
    \end{equation}
\end{enumerate}

In Section 3,  under the assumptions \ref{ass:A1}-\ref{ass:A4}, we investigate existence and uniqueness of solution for \eqref{eq:DiscDDE2}. Next, in Section 4 we investigate error of the { {\it randomized two-stage Runge-Kutta scheme} under slightly stronger assumptions.  Namely, we impose H\"older continuity assumptions on $f=(t,x,z)$ with respect to $(t,z)$.}

{The definition of the two-stage randomized Runge-Kutta method for DDEs goes as follows.} $(\gamma_k^j)_{j\in\mathbb{N}_0,k\in\mathbb{N}}$ iid from $\mathcal{U}(0,1)$, $N\in\mathbb{N}$, $h=\tau/N$, $t_k^j=j\tau+kh$, $\theta_{k+1}^j=t_k^j+h\gamma_{k+1}^j$ for $k=0,1,\ldots,N$, {$j=-1,0,1,\ldots,n$. Also let $h_{k+1}^j:=h\gamma_{k+1}^j$
and define $y_k^{-1}=\varphi(t_k^{-1})$.

For $j= 0$, define
\begin{align}\label{eqn:rrk_0}
    \begin{split}
       &y_0^0=y_N^{-1},\\
    &\tilde y_{k+1}^{-1,0}=\varphi(t_k^{-1}+h_{k+1}^0)\\
    &\tilde y_{k+1}^0=y_k^0+h_{k+1}^0\cdot f(t_k^0,y_k^0,y_k^{-1}),\\
    &y_{k+1}^0=y_k^0+h\cdot f(\theta_{k+1}^0,\tilde y_{k+1}^0,\tilde y_{k+1}^{-1,0}),   
    \end{split}
\end{align}

and for $j\geq 1$,
\begin{align}\label{eqn:rrk_j}
    \begin{split}
            &y_0^j=y_N^{j-1},\\
    &\tilde y_{k+1}^{j-1,j}=y_k^{j-1}+h_{k+1}^j\cdot f(t_k^{j-1},y_k^{j-1},y_{k}^{j-2}),\\
    &\tilde y_{k+1}^j=y_k^j+h_{k+1}^j\cdot f(t_k^j,y_k^j,y_k^{j-1}),\\
    &y_{k+1}^j=y_k^j+h\cdot f(\theta_{k+1}^j,\tilde  y_{k+1}^j,\tilde  y_{k+1}^{j-1,j}).
    \end{split}
\end{align}

Note that for $j=1$, the delay term $y^{j-2}_k$ in the second line of \eqref{eqn:rrk_j} is exactly the evaluation of the initial condition $\varphi (t^{-1}_k)$.

Let us call $\tilde y_{k+1}^{j-1,j}$ and $\tilde y_{k+1}^{j}$ \textit{the intermediate delay term} and \textit{the intermediate term} respectively. A key component here is the intermediate delay term $\tilde y_{k+1}^{j-1,j}$: we do not recycle the intermediate term $\tilde y_{k+1}^{j-1}$ computed from preceding $\tau$-interval $[(j-1)\tau, j\tau]$ to replace $\tilde y_{k+1}^{j-1,j}$ for a simplified computation, because $\tilde y_{k+1}^{j-1}$ is generated on $\gamma^{j-1}_{k+1}$, a different random resource from $\gamma^{j}_{k+1}$. {It is easy to see that each random vector $y_k^j$, $j=0,1,\ldots,n$, $k=0,1,\ldots,N$, is measurable with respect to the $\sigma$-field generated by the following family of independent random variables
\begin{equation}
\label{sig_field1}
\Bigl\{\theta_1^0\ldots,\theta_N^0,\ldots,\theta_1^{j-1},\ldots,\theta_N^{j-1},\theta_1^j,\ldots,\theta_k^j\Bigr\}.
\end{equation}

As the horizon parameter $n$ is fixed, the randomized two-stage Runge-Kutta scheme uses $O(N)$ evaluations of $f$ (with a constant
in the $O$ notation that only depends on $n$ but not on $N$).}
}

{To help readers better understand the randomized RK method, we present two figures \ref{fig:RRKj0} and \ref{fig:RRKj1}. Figure \ref{fig:RRKj0} depicts the scenario when $j=0$, while Figure \ref{fig:RRKj1} showcases the situation for $j\ge 1$, using $j=1$ as a specific example. Within these figures, different markers are employed to convey distinct meanings: dashed lines represent assignments; black solid lines and solid circles signify computation methods; colored solid circles indicate the values of different terms at the time $t$; the black dots represent the value of time $t$. For clarity, text is color-coded to match the terms they represent. Structurally, the figures are divided into two primary sections vertically along the coordinate axis. The upper section encompasses all computations excluding the generation of $y_{i+1}^j$, while the lower section specifically illustrates the operations involved in generating $y_{i+1}^j$.
In both figures, for any $i$ in $[0,1,\ldots,N-1]$, we aim to compute the value of $y_{i+1}^j$ at the time point $t_{i+1}^j$. We assume that the values of $y_i^j$ for all time points before $t_{i+1}^j$ are known. Utilizing these known values, we can calculate the value of $y_{i+1}^j$ at the time point $t_{i+1}^j$ using Eqn. \eqref{eqn:rrk_0} if $j = 0$ or Eqn. \eqref{eqn:rrk_j} if $j \ge 1$.}

\begin{figure}[h]
  \centering
  \begin{subfigure}{0.95\textwidth}
    \includegraphics[width=\textwidth]{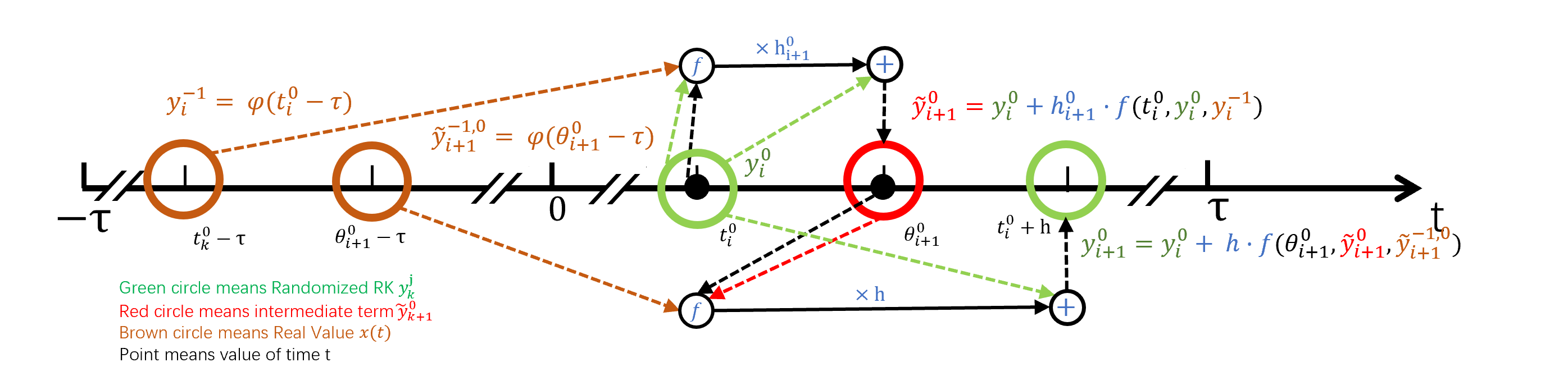} 
    \caption{Scenario for $j=0$}
    \label{fig:RRKj0}
  \end{subfigure}
  
  \vspace{1cm}
  
  \begin{subfigure}{0.95\textwidth}
    \includegraphics[width=\textwidth]{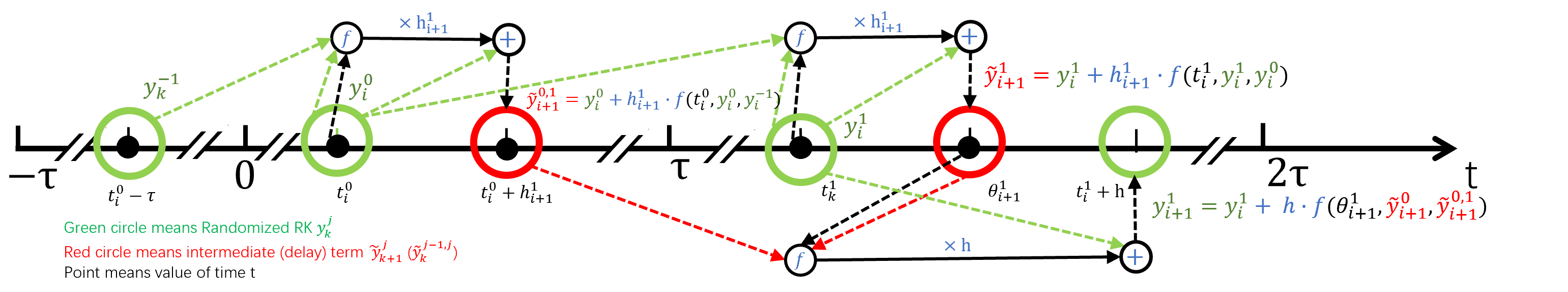} 
    \caption{Scenario for $j>0$ with $j=1$ exemplified}
    \label{fig:RRKj1}
  \end{subfigure}
  
  \caption{Visual representation of Randomized RK method for different $j$.}
  \label{fig:combined}
\end{figure}

\section{Properties of solutions to  Carath\'eodory DDEs}
In this section we investigate the issue of existence and uniqueness of the solution of \eqref{eq:DiscDDE2} under the assumptions \ref{ass:A1}-\ref{ass:A4}. 

In the sequel we use the following equivalent representation of the solution of \eqref{eq:DiscDDE2}, that is very convenient when proving its properties and when estimating the error of the randomized RK scheme. For $j=0,1,\ldots,n$ and $t\in [0,\tau]$ it holds
\begin{equation}
    x'(t+j\tau)=f(t+j\tau,x(t+j\tau),x(t+(j-1)\tau)).
\end{equation}

Hence, we take {$\phi_{-1}(t):=\varphi(t-\tau)$},  $\phi_j(t)\udef x(t+j\tau)$ and for $j=0,1,\ldots,n$ we consider the following sequence of initial-value problems
\begin{equation}
\label{eqODE_j0}
\begin{cases}
\phi_j'(t)=g_j(t,\phi_j(t)), & t\in [0,\tau], \\
\phi_j(0)=\phi_{j-1}(\tau), 
\end{cases}
\end{equation}

with $g_j(t,x)=f(t+j\tau,x,\phi_{j-1}(t))$, $(t,x)\in [0,\tau]\times\mathbb{R}^d$. Then the solution of \eqref{eq:DiscDDE2} can be written as
  \begin{equation}
        x(t)=\sum\limits_{j=-1}^n \phi_j(t-j\tau)\cdot\mathbf{1}_{[j\tau,(j+1)\tau]}(t), \quad t\in [-\tau,(n+1)\tau].
    \end{equation}

We prove the following result about existence, uniqueness and H\"older regularity of the solution of the delay differential equation \eqref{eq:DiscDDE2}. We will use this theorem in the next section when proving error estimate for the randomized RK algorithm. 
\begin{thm}
\label{sol_dde_prop_1}
    Let $n\in\N\cup\{0\}$, $\tau\in (0,+\infty)$, $x_0\in\R^d$ and let $f$, {$\varphi$} satisfy {the} assumptions \ref{ass:A1}-\ref{ass:A4}. Then there exists a unique absolutely continuous solution $x=x(x_0,f)$ to \eqref{eq:DiscDDE2} such that for $j=0,1,\ldots,n$ we have
    \begin{equation}
    \label{upper_est_Kj}
        \sup\limits_{0\leq t \leq \tau}|\phi_j(t)|\leq K_j
    \end{equation}
    
    where  {$K_{-1}:=\max_{t\in [-\tau,0]}|\varphi(t)|$} and
    {
    \begin{equation}
    \label{def_K_j}
        K_j=(1+K_{j-1})(1+K\tau )\cdot\exp\Bigl((1+K_{j-1})K\tau \Bigr).
    \end{equation}
    }
    
    then for all $j=0,1,\ldots,n$, $t,s\in [0,\tau]$ it holds
   {
    \begin{equation}
    \label{phi_j_Lipschitz}
        |\phi_j(t)-\phi_j(s)|\leq (1+K_{j-1})(1+K_{j}) K|t-s|.
    \end{equation}}
\end{thm}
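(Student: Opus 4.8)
The plan is to treat the DDE on each interval $[j\tau,(j+1)\tau]$ inductively, using the reformulation \eqref{eqODE_j0} as a sequence of classical (Carath\'eodory) ODEs driven by the functions $g_j(t,x)=f(t+j\tau,x,\phi_{j-1}(t))$. The base case $j=-1$ is immediate from \ref{ass:A1}: $\phi_{-1}(t)=\varphi(t-\tau)$ is continuous on $[0,\tau]$ with $\sup|\phi_{-1}|\le K_{-1}$. For the inductive step, assume $\phi_{j-1}$ has been constructed on $[0,\tau]$, is absolutely continuous (or continuous, for $j=0$), and satisfies $\sup_{[0,\tau]}|\phi_{j-1}|\le K_{j-1}$. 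Then $g_j$ is Carath\'eodory: measurable in $t$ by \ref{ass:A2}, continuous in $x$ by \ref{ass:A1}, and by \ref{ass:A3} it satisfies the linear-growth bound $|g_j(t,x)|\le K(1+|x|)(1+|\phi_{j-1}(t)|)\le K(1+K_{j-1})(1+|x|)$, while by \ref{ass:A4} it is Lipschitz in $x$ with constant $L(1+K_{j-1})$. Hence by the classical Carath\'eodory existence theorem plus Picard--Lindel\"of-type uniqueness (Gr\"onwall on the Lipschitz estimate), there is a unique absolutely continuous $\phi_j$ on $[0,\tau]$ solving \eqref{eqODE_j0} with $\phi_j(0)=\phi_{j-1}(\tau)$. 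Gluing the $\phi_j$ via the indicator-sum formula and checking continuity at the knots $t=j\tau$ (which holds since $\phi_j(0)=\phi_{j-1}(\tau)$) gives the unique absolutely continuous solution $x$ on $[-\tau,(n+1)\tau]$.

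Next I would derive the a priori bound \eqref{upper_est_Kj}. Writing the integral form $\phi_j(t)=\phi_j(0)+\int_0^t g_j(s,\phi_j(s))\,\de s$, take norms and use the growth bound and $|\phi_j(0)|=|\phi_{j-1}(\tau)|\le K_{j-1}$:
\begin{equation*}
|\phi_j(t)|\le K_{j-1}+K(1+K_{j-1})\int_0^t (1+|\phi_j(s)|)\,\de s
\le K_{j-1}+K(1+K_{j-1})\tau + K(1+K_{j-1})\int_0^t |\phi_j(s)|\,\de s .
\end{equation*}
Gr\"onwall's inequality then yields $\sup_{[0,\tau]}|\phi_j(t)|\le \bigl(K_{j-1}+K(1+K_{j-1})\tau\bigr)\exp\bigl(K(1+K_{j-1})\tau\bigr)$, and since $K_{j-1}+K(1+K_{j-1})\tau\le (1+K_{j-1})(1+K\tau)$, this is exactly the claimed $K_j$ in \eqref{def_K_j}. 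This closes the induction.

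Finally, for the Lipschitz/H\"older-regularity estimate \eqref{phi_j_Lipschitz}, I would use the integral representation directly: for $s\le t$ in $[0,\tau]$,
\begin{equation*}
|\phi_j(t)-\phi_j(s)|=\Bigl|\int_s^t g_j(u,\phi_j(u))\,\de u\Bigr|\le \int_s^t |g_j(u,\phi_j(u))|\,\de u \le K(1+K_{j-1})\int_s^t (1+|\phi_j(u)|)\,\de u,
\end{equation*}
and then bound $1+|\phi_j(u)|\le 1+K_j$ using \eqref{upper_est_Kj} to get $|\phi_j(t)-\phi_j(s)|\le K(1+K_{j-1})(1+K_j)|t-s|$, which is precisely \eqref{phi_j_Lipschitz}. (If one prefers not to absorb, $1+K_j \le (1+K_{j-1})(1+K\tau)e^{(1+K_{j-1})K\tau}$ could also be substituted, but the stated form already follows.) The only genuinely delicate point is the first one: being careful that $g_j$ is a bona fide Carath\'eodory function so that the standard existence/uniqueness machinery applies on each slab, and that the pieced-together $x$ is globally absolutely continuous — this rests on the inductively maintained regularity of $\phi_{j-1}$ and the matching of endpoint/initial values at the knots. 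The Gr\"onwall estimates for \eqref{upper_est_Kj} and \eqref{phi_j_Lipschitz} are then routine.
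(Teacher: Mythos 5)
Your proposal is correct and follows essentially the same route as the paper: an induction over the $\tau$-slabs, reducing each step to a Carath\'eodory ODE for $\phi_j$ with growth constant $K(1+K_{j-1})$ and Lipschitz constant $L(1+K_{j-1})$, and then Gr\"onwall plus the integral representation for \eqref{upper_est_Kj} and \eqref{phi_j_Lipschitz}. The only cosmetic difference is that the paper delegates the per-slab existence, bound and Lipschitz estimate to Lemma \ref{lem_ode_1} (and the cited reference) rather than writing out the Gr\"onwall arguments explicitly as you do.
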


{
The proof of this theorem follows Theorem 3.1 from \cite{difonzo2024existence}. However, due to different initial conditions and assumptions, there are some subtle variations in the proof and conclusions. Specifically, the differences are as follows:

\begin{enumerate}
    \item \cite{difonzo2024existence} considers a constant initial condition case, ie, $\varphi(t)=x_0$ for $t\in [-\tau, 0]$, while in our DDE, a general initial condition $\varphi(t)$ is taken, allowing the initial condition to vary with time.
    
    \item Compared to \cite[Assumptions A3 and A4]{difonzo2024existence}, we impose stronger assumptions \ref{ass:A3} and \ref{ass:A4} on $f$ such that the coefficients of linear growth and Lipschitz condition are time-invariant.
\end{enumerate}
\begin{rmk}
  The change mentioned in (2) above results in a stronger regularity of the auxiliary ODEs \eqref{eqODE_j0}: \cite[Lemma 7.1]{difonzo2024existence} claims that the true solution is H\"older-continuous while, under our assumptions, the exact solution is Lipschitz as shown in \eqref{phi_j_Lipschitz}. 
\end{rmk}
\begin{proof}
Follow the proof of Theorem 3.1 in \cite{difonzo2024existence}, we proceed by induction. We start with the case when $j=0$ and consider the following initial-value problem. 
\begin{equation}
\label{eqODE_0}
    \begin{cases}
    \phi_0'(t)=g_0(t,\phi_0(t)), & t\in [0,\tau], \\
        {\phi_0(0)=\varphi(0)},
    \end{cases}
\end{equation}

with $g_0(t,x)=f(t,x,\phi_{-1}(t))=f(t,x,x_0)$. It is obvious that for all $t\in [0,\tau]$ the function $g_0(t,\cdot)$ is continuous and for all $x\in\R^d$ the function $g_0(\cdot,x)$ is Borel measurable. {Moreover, by \eqref{assumpt_a3} we have 
$${|g_0(t,x)|\leq K(1+|\varphi(t-\tau)|)(1+|x|)\leq K(1+K_{-1})(1+|x|)}$$ for all $(t,x)\in [0,\tau]\times\R^d$, and by \eqref{assumpt_a4} {there exists $L\in (0,\infty)$ such that for all $t\in [0,\tau]$, $x,y\in\R^d$ we have}}
\begin{equation}
    {|g_0(t,x)-g_0(t,y)|\leq L (1+|\varphi(t-\tau)|) |x-y|\leq L(1+K_{-1})|x-y|.}
\end{equation}

Therefore, by Lemma \ref{lem_ode_1} we have that there exists a unique absolutely continuous solution $\phi_0:[0,\tau]\to\R^d$ of \eqref{eqODE_0} that satisfies \eqref{upper_est_Kj} with $j=0$. {In addition, by Lemma \ref{lem_ode_1} we obtain that $\phi_0$ satisfies \eqref{phi_j_Lipschitz} for $j=0$. }
For the inductive step from $j$ to $j+1$, we can simply follow the approach provided \cite{difonzo2024existence}, since our assumptions \ref{ass:A3} and \ref{ass:A4} are stronger than \cite[Assumptions A3 and A4]{difonzo2024existence}. It is crucial to note that \cite[Eqn. (3.14)]{difonzo2024existence} will be modified to 
\[|\phi_{j+1}(t)-\phi_{j+1}(s)| \leq (1+K_{j})(1+K_{j+1}) K |t-s|,\]

due to the differences between our Lemma \ref{lem_ode_1} and \cite[Lemma 7.1]{difonzo2024existence}. This concludes the inductive proof.
\end{proof}
}

\section{Error of the randomized  RK scheme }
In this section we perform detailed  error analysis for the randomized RK scheme. As mentioned in Section 1, for the error analysis we impose global Lipschitz assumption on $f=f(t,x,z)$ with respect to $x$ together with global H\"older condition with respect to $z$. Namely, instead of \ref{ass:A3} and \ref{ass:A4}, we assume
{
\begin{enumerate}[label=\textbf{(A3')},ref=(A3')]
    \item\label{ass:A3'}
     there exist $\bar K \in [0,+\infty)$ such that for all $t\in [0,(n+1)\tau]$, 
    \begin{equation}
        |f(t,0,0)|\leq \bar K,
    \end{equation}
     and there exist $L \in [0,+\infty)$, {$\alpha,\gamma\in (0,1]$} such that:\\ 
     1. for all $t\in [0,(n+1)\tau]$, $x_1,x_2,z_1,z_2\in\R^d$, 
    \begin{equation}\label{eqn:f_assum_xz}
        |f(t,x_1,z_1)-f(t,x_2,z_2)|\leq L\Bigl( |x_1-x_2|+|z_1-z_2|^{\alpha}\Bigr),
    \end{equation}
    2. for all $t_1, t_2\in [0,(n+1)\tau]$, $x,z\in\R^d$, 
    \begin{equation}
        |f(t_1,x,z)-f(t_2,x,z)|\leq L(1+|x|+|z|)|t_1-t_2|^\gamma.
    \end{equation}
    3. for all $s,t\in [-\tau, 0]$, 
    \begin{equation}
    |\varphi(t)-\varphi(s)|\leq L|t-s|, \quad s,t\in [-\tau, 0].
    \end{equation}
\end{enumerate}}

\begin{rem}
\label{rem_str_assum}
Note that the assumptions \ref{ass:A1}, \ref{ass:A2}, \ref{ass:A3'} are stronger than the assumptions \ref{ass:A1}-\ref{ass:A4}. To see that note that if $f$ satisfies \ref{ass:A1}, \ref{ass:A2}, \ref{ass:A3'}  then we get  for all $t\in [0,(n+1)\tau]$ and $x,x_1,x_2,z\in\mathbb{R}^d$ that
{
\begin{equation}
    |f(t,x,z)|\leq (\bar  K+L)(1+|x|)(1+|z|),
\end{equation}
\begin{equation}
    |f(t,x_1,z)-f(t,x_2,z)|\leq L|x_1-x_2|,
\end{equation}

and
\begin{equation}\label{eqn:f_fulldiff}
    |f(t_1,x_1,z_1)-f(t_2,x_2,z_2)|\leq L\big( (1+|x|+|z|)|t_1-t_2|^\gamma+|x_1-x_2|+|z_1-z_2|^{\alpha}\big),
\end{equation}}
since $1+|x|+|z|\leq (1+|x|)(1+|z|)$ and $|z|^{\alpha}\leq 1+|z|$ for all $x,z\in\mathbb{R}^d$.  {Hence, the assumptions \ref{ass:A1}-\ref{ass:A4} are satisfied with $K=\bar K+L < \infty $, and under the assumptions  \ref{ass:A1}, \ref{ass:A2}, \ref{ass:A3'} the thesis of Theorem \ref{sol_dde_prop_1} holds. }
{\color{cyan}
}
\end{rem}
\begin{lem}\label{lem:gjholder}
    The  function $[0,\tau]\ni t\mapsto g_{j}(t,\phi_{j}(t))$ is bounded in $L^p([0,\tau])$ norm for each $j\in \mathbb{N}\setminus\{0\}$, and is $\min\{\gamma,\alpha\}$-H\"older continuous. 

    In particular, if the initial condition $\varphi(t)=x_0$ for $t\in [-\tau,0]$, then $g_0(\cdot, \phi_0(\cdot))$ is $\gamma$-H\"older continuous.
\end{lem}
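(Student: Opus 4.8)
The plan is to prove the $L^p$-boundedness and Hölder continuity of $t\mapsto g_j(t,\phi_j(t))$ by unwinding the definition $g_j(t,x)=f(t+j\tau,x,\phi_{j-1}(t))$ and combining the regularity of $f$ from assumption \ref{ass:A3'} with the Lipschitz regularity of $\phi_j$ and $\phi_{j-1}$ established in Theorem \ref{sol_dde_prop_1}. For $j\in\mathbb N\setminus\{0\}$ both $\phi_j$ and $\phi_{j-1}$ are genuine solutions of the auxiliary ODEs \eqref{eqODE_j0}, hence Lipschitz on $[0,\tau]$ by \eqref{phi_j_Lipschitz}; for $j=0$ the delay argument is $\phi_{-1}(t)=\varphi(t-\tau)$, which is Lipschitz only if $\varphi$ is — this is exactly why the special case requires $\varphi\equiv x_0$ (constant, hence the $z$-argument is frozen and contributes nothing).

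First I would record the bound: for $t\in[0,\tau]$,
\begin{equation*}
|g_j(t,\phi_j(t))| = |f(t+j\tau,\phi_j(t),\phi_{j-1}(t))| \leq |f(t+j\tau,0,0)| + L\bigl(|\phi_j(t)| + |\phi_{j-1}(t)|^{\alpha}\bigr) \leq \bar K + L\bigl(K_j + 1 + K_{j-1}\bigr),
\end{equation*}
using \eqref{eqn:f_assum_xz} with $x_2=z_2=0$, the first part of \ref{ass:A3'}, the sup-bounds \eqref{upper_est_Kj}, and $|\phi_{j-1}(t)|^\alpha\le 1+|\phi_{j-1}(t)|$. Since the right-hand side is a constant independent of $t$, the function is bounded, hence certainly in $L^p([0,\tau])$ with norm at most $\tau^{1/p}$ times that constant.

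Next, for Hölder continuity, take $s,t\in[0,\tau]$ and split via the triangle inequality through the intermediate point $f(t+j\tau,\phi_j(s),\phi_{j-1}(s))$:
\begin{align*}
|g_j(t,\phi_j(t)) - g_j(s,\phi_j(s))| &\leq |f(t+j\tau,\phi_j(t),\phi_{j-1}(t)) - f(t+j\tau,\phi_j(s),\phi_{j-1}(s))| \\
&\quad + |f(t+j\tau,\phi_j(s),\phi_{j-1}(s)) - f(s+j\tau,\phi_j(s),\phi_{j-1}(s))|.
\end{align*}
The first term is bounded by $L\bigl(|\phi_j(t)-\phi_j(s)| + |\phi_{j-1}(t)-\phi_{j-1}(s)|^{\alpha}\bigr)$ using \eqref{eqn:f_assum_xz}, and then by the Lipschitz estimate \eqref{phi_j_Lipschitz} this is $\leq C_j\bigl(|t-s| + |t-s|^{\alpha}\bigr)$, which on the bounded interval $[0,\tau]$ is $\leq C_j'|t-s|^{\alpha}$ (absorbing $\tau^{1-\alpha}$). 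The second term is bounded by $L(1+|\phi_j(s)|+|\phi_{j-1}(s)|)|t-s|^{\gamma} \leq L(1+K_j+K_{j-1})|t-s|^{\gamma}$ via the $\gamma$-Hölder-in-time part of \ref{ass:A3'} and \eqref{upper_est_Kj}. Combining and again using that $[0,\tau]$ is bounded to compare $|t-s|^\alpha$ and $|t-s|^\gamma$ with $|t-s|^{\min\{\gamma,\alpha\}}$ gives the claimed $\min\{\gamma,\alpha\}$-Hölder continuity.

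For the final assertion with $\varphi\equiv x_0$: then $\phi_{-1}(t)=x_0$ is constant, so $g_0(t,x)=f(t,x,x_0)$ and the $z$-dependence drops out entirely. Repeating the argument, the increment $|\phi_{-1}(t)-\phi_{-1}(s)|^\alpha$ vanishes, leaving only the $\phi_0$-Lipschitz term (contributing exponent $1\ge\gamma$) and the $\gamma$-Hölder-in-time term, so $g_0(\cdot,\phi_0(\cdot))$ is $\gamma$-Hölder. The only delicate point in the whole argument — and the reason the general $j=0$ case is excluded from the first statement — is that $\phi_{-1}=\varphi(\cdot-\tau)$ need not have any modulus of continuity better than what \ref{ass:A3'} part 3 gives (Lipschitz), but when $\varphi$ is merely continuous (the setting of the first statement, inherited from \ref{ass:A1}) one cannot control $|\varphi(t-\tau)-\varphi(s-\tau)|$, so the quantitative Hölder bound genuinely needs $j\geq 1$; I would make sure to state this restriction explicitly rather than sweeping it under the rug.
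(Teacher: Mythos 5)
Your proof is correct and follows essentially the same route as the paper: the paper simply invokes the combined estimate \eqref{eqn:f_fulldiff} (which packages exactly your triangle-inequality splitting) together with \eqref{upper_est_Kj} and \eqref{phi_j_Lipschitz}, and handles the constant-initial-condition case the same way. One minor aside: your closing claim that for $j=0$ the function $\varphi$ is ``merely continuous'' and hence uncontrollable is not the setting of this lemma---part 3 of \ref{ass:A3'} assumes $\varphi$ is Lipschitz---but this only affects your commentary on why $j=0$ is excluded, not the validity of the proof of the stated result.
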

\begin{proof}
    {Since the function $[0,\tau]\ni t\mapsto g_{j}(t,\phi_{j}(t))$ is Borel measurable and by Theorem \ref{sol_dde_prop_1}, \eqref{eqn:f_fulldiff}, \eqref{upper_est_Kj} and \eqref{phi_j_Lipschitz} we get}
\begin{equation}
\label{equ:supg}
    \|g_{j}(\cdot,\phi_{j}(\cdot))\|_{L^p([0,\tau])}\leq (1+K_{j-1})(1+K_{j})K<+\infty,
\end{equation}
and
\begin{align} \label{euq:diffgsandt}
&\left|g_j(s,\phi_j(s))-g_j(t,\phi_j(t))\right|\\
&\leq  L((1+|\phi_j(s)|+|\phi_{j-1}(s)|)|s-t|^\gamma+|\phi_{j}(s)-\phi_{j}(t)|+|\phi_{j-1}(s)-\phi_{j-1}(t)|^\alpha)\notag\\
&\leq  L((1+|K_j|+|K_{j-1}|)|s-t|^\gamma+|(1+K_{j-1})(1+K_{j}) K|t-s|\notag\\
&+|(1+K_{j-2})(1+K_{j-1}) K|t-s|^\alpha)\notag\\
&\leq {C_{g,j}}|s-t|^{\min\{\gamma,\alpha\}},
\end{align}
where {$C_{g,j}$} is a generic constant, for any $s,t\in [0,\tau]$ and $j \in \mathbb{N}$. 
\end{proof}
The main result of this section is as follows.

\begin{thm} 
\label{rate_of_conv_expl_Eul} 
Let $n\in\N\cup\{0\}$, $\tau\in (0,+\infty)$, $x_0\in\R^d$, and let $f$, ${\varphi}$ satisfy the assumptions  \ref{ass:A1}, \ref{ass:A2}, \ref{ass:A3'} for some $p\in [2,+\infty)$ and {$\alpha,\gamma\in (0,1]$}. There exist ${C_{p,0},C_{p,1},\ldots,C_{p,n}\in (0,+\infty)}$ such that for  all $N\geq \lceil \tau\rceil$ it holds

for $j=0,1,\ldots,n$,
	\begin{equation}
	\label{error_main_rk_thm}
		\Bigl\|\max\limits_{0\leq i\leq N}|x(t_i^j)-y_i^j|\Bigl\|_{p}\leq  {C_{p,j}} h^{\alpha^j \rho},
	\end{equation}
 where $\rho:=\frac{1}{2}+\min\{\gamma,\alpha\}$.

\end{thm}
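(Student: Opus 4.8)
The plan is to prove \eqref{error_main_rk_thm} by induction on the delay interval index $j=0,1,\ldots,n$, exploiting the representation of the solution through the auxiliary ODEs \eqref{eqODE_j0} and the one-step structure of the scheme \eqref{eqn:rrk_0}--\eqref{eqn:rrk_j}. The base case $j=0$ is essentially the error analysis of a randomized two-stage Runge--Kutta method for the Carath\'eodory ODE $\phi_0'(t)=g_0(t,\phi_0(t))$, where by Lemma \ref{lem:gjholder} the driving function $t\mapsto g_0(t,\phi_0(t))$ is $\min\{\gamma,\alpha\}$-H\"older (and $\gamma$-H\"older in the constant-$\varphi$ case, but in general only $\min\{\gamma,\alpha\}$ because of the delay term $\varphi(t-\tau)$ — note this is why the exponent $\rho=\frac12+\min\{\gamma,\alpha\}$ appears, with $\alpha^0=1$ for $j=0$). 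First I would write the exact local error of one step, splitting it into a \emph{deterministic quadrature part} — comparing $\int_{t_k^0}^{t_{k+1}^0} g_0(s,\phi_0(s))\,\de s$ with $h\,g_0(\theta_{k+1}^0,\ldots)$ — and a \emph{propagated part} coming from replacing $\phi_0$ by the numerical values inside $f$. For the quadrature part one uses the standard randomized-quadrature trick: conditionally on the past, $\mathbb{E}[h\,\psi(\theta_{k+1}^0)\mid\ldots]=\int_{t_k^0}^{t_{k+1}^0}\psi(s)\,\de s$ for $\psi(s)=g_0(s,\phi_0(s))$, so the sum of these terms over $k$ is a sum of conditionally-centered increments; applying a discrete Burkholder--Davis--Gundy / martingale-type inequality in $L^p(\Omega)$ gives a bound of order $N^{1/2}$ times the per-step $L^p$ size $O(h^{1+\min\{\gamma,\alpha\}})$ of each centered increment (the extra $h^{\min\{\gamma,\alpha\}}$ from H\"older continuity of $\psi$), hence $N^{1/2}\cdot h^{1+\min\{\gamma,\alpha\}}=h^{1/2+\min\{\gamma,\alpha\}}=h^\rho$. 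The non-centered remainder from the inner stage ($\tilde y$ vs. $\phi_0(\theta_{k+1}^0)$) and from H\"older-in-$t$ of $f$ is handled pathwise and also contributes at order $h^\rho$ after summation.

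Next I would set up the propagation: let $e_k^j:=\max_{0\le i\le k}|x(t_i^j)-y_i^j|$ or rather work with $\|\,|x(t_k^j)-y_k^j|\,\|_p$ and a running maximum. Using assumption \ref{ass:A3'} part 1, the Lipschitz-in-$x$ estimate \eqref{assumpt_a4}-type bound $|f(t,x_1,z)-f(t,x_2,z)|\le L|x_1-x_2|$ gives the crucial \emph{linear} (not $\alpha$-degraded) dependence on the current-layer error, while the $z$-argument enters only through the \emph{previous} layer $j-1$ and only H\"older-continuously, producing a term $L\,\|\,\max_i|x(t_i^{j-1})-y_i^{j-1}|\,\|_{p}^{\alpha}$ (after moving the $\alpha$-power through the $L^p$ norm via Jensen / $\|X^\alpha\|_p=\|X\|_{\alpha p}^\alpha\le\|X\|_{p}^{\alpha}$ when $\alpha\le1$, using $p\ge2$ so $\alpha p$-norms are controlled by $p$-norms up to constants on the relevant objects, or more carefully bounding $\|X\|_{\alpha p}\le\|X\|_p$). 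Here one must also control the extra errors coming from the intermediate delay term $\tilde y_{k+1}^{j-1,j}$ versus $\phi_{j-1}$ evaluated at the random node, and versus $x(t_k^{j-1})$; these are dominated by $e^{j-1}$ plus a one-step $O(h)$ discretization term, and since they enter $f$ through the $z$-slot they again appear with exponent $\alpha$. Combining the local-error bound of order $h^\rho$ with a discrete Gr\"onwall argument over $k=0,\ldots,N$ (the constant $(1+K_{j-1})$, $L$ being absorbed into $C_{p,j}$, with $\exp(L\tau)$ from Gr\"onwall) yields
\[
\Bigl\|\max_{0\le i\le N}|x(t_i^j)-y_i^j|\Bigr\|_p \le C_{p,j}\bigl(h^\rho + \Bigl\|\max_{0\le i\le N}|x(t_i^{j-1})-y_i^{j-1}|\Bigr\|_p^{\alpha}\bigr),
\]
and then feeding in the inductive hypothesis $\|\cdots\|_p\le C_{p,j-1}h^{\alpha^{j-1}\rho}$ gives the term $C_{p,j-1}^\alpha h^{\alpha^{j}\rho}$, which is the dominant one (since $\alpha^j\rho\le\alpha^{j-1}\rho\le\rho$ and $h\le 1$), producing \eqref{error_main_rk_thm} with a suitably enlarged constant $C_{p,j}$.

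The main obstacle I anticipate is making the randomized-quadrature martingale estimate fully rigorous in the $L^p(\Omega)$ norm while simultaneously carrying the numerical error inside the integrand: the "clean" martingale structure holds for $\int g_j(s,\phi_j(s))\,\de s - h\,g_j(\theta_{k+1}^j,\phi_j(\theta_{k+1}^j),\phi_{j-1}(\theta_{k+1}^j))$ with the \emph{exact} solution plugged in, so one must first peel off this exact-solution quadrature error (bounded by $h^\rho$ via BDG + H\"older as above, using that $\theta_{k+1}^j$ is independent of the $\sigma$-field \eqref{sig_field1} generated by earlier nodes) and then separately bound the difference between that and the actual scheme increment by Lipschitz/H\"older estimates that only involve already-controlled errors. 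A secondary technical point is the bookkeeping of the $\alpha$-powers through successive layers and through the $L^p$-norm (ensuring $\|\max_i|\cdot|^\alpha\|_p$ is correctly related to $(\|\max_i|\cdot|\|_p)^\alpha$ and that all moments used are finite — guaranteed by the a priori bounds \eqref{upper_est_Kj}, \eqref{phi_j_Lipschitz} and the linear-growth-type control of the scheme iterates, which should be established as a preliminary lemma: $\sup_{k}\|y_k^j\|_p<\infty$ uniformly in $N$). Once these two points are handled, the remaining estimates — H\"older continuity of $g_j(\cdot,\phi_j(\cdot))$ from Lemma \ref{lem:gjholder}, one-step Taylor-type expansion of the two-stage update, and discrete Gr\"onwall — are routine.
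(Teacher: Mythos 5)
Your overall strategy is essentially the paper's: induction over the delay layers, a comparison of the implementable scheme with an ``exact-delay'' auxiliary scheme (what you describe as peeling off the exact-solution quadrature error is exactly what the paper formalizes through the non-implementable ARRK iterates $\bar y^{l+1}$ driven by $\phi_l$), a randomized-quadrature/martingale bound in $L^p(\Omega)$ of order $h^{\rho}$ for that clean part using the H\"older continuity of $g_{l+1}(\cdot,\phi_{l+1}(\cdot))$ from Lemma \ref{lem:gjholder} and the arguments of \cite{RKYW2017}, and then propagation of the previous-layer error through the $z$-slot with exponent $\alpha$ via Jensen, closed by a discrete Gronwall argument; the recursion you write down and the way you feed in the inductive hypothesis is precisely how the exponent $\alpha^{j}\rho$ arises in the paper.

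There is, however, one concrete step in your sketch that would fail to give the stated rate: the handling of the intermediate delay term $\tilde y^{j-1,j}_{k+1}$. You bound its deviation from $\phi_{j-1}$ at the random node by ``$e^{j-1}$ plus a one-step $O(h)$ discretization term'' and then raise this to the power $\alpha$; after multiplying by $h$ and summing over $k$ this leaves an irreducible contribution of order $h^{\alpha}$, which caps the rate at $\alpha$ and is strictly weaker than the claimed $h^{\alpha^{j}\rho}$ whenever $\alpha^{j-1}\rho>1$ (e.g.\ for $\alpha=\gamma=1$ the theorem claims $h^{3/2}$ for $j\geq 1$, while your bound yields only $h$). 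The paper avoids this loss in \eqref{eqn:estimate_forj+1}: it writes $\phi_l(\delta^{l+1}_j)-y^{l}_{j-1}-h^{l+1}_j f(t^l_{j-1},y^l_{j-1},y^{l-1}_{j-1})$ using the integral form of $\phi_l$ over the random window of length $h^{l+1}_j\leq h$, so the purely deterministic part becomes an integral of a difference of $f$-values and is of size $O(h^{1+\min\{\gamma,\alpha\}})$ plus already-controlled errors, which after the outer $\alpha$-power and summation gives $h^{\alpha(1+\min\{\gamma,\alpha\})}$, small enough since $\alpha(1+\min\{\gamma,\alpha\})\geq \alpha\rho\geq\alpha^{j}\rho$. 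Two smaller points: this same term injects the layer $j-2$ error raised to the power $\alpha^{2}$ (because $y^{j-2}$ sits in the $z$-slot of the inner $f$), so the induction must be a strong induction invoking the hypothesis at both preceding layers, which your single-level recursion glosses over; and the paper never needs the uniform moment bound on the iterates $y^j_k$ that you list as a preliminary lemma, because every estimate is written as a difference against exact-solution quantities bounded by \eqref{upper_est_Kj} and \eqref{phi_j_Lipschitz}.
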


\begin{proof} {In the proof we use the following auxiliary notations: $\delta_{k+1}^j=h(k+\gamma_{k+1}^j)$ and $h_{k+1}^j=h\cdot \gamma_{k+1}^j$. Then $\delta_{k+1}^j$ is uniformly distributed in $(t_k^0,t_{k+1}^0)$ and $\theta_{k+1}^j=\delta_{k+1}^j+j\tau$ is uniformly distributed in $(t_k^j,t_{k+1}^j)$. The latter one $h_{k+1}^j$ is a random stepsize uniformly distributed in $[0,h]$.}

We start with $j=0$ and consider the initial-vale problem \eqref{eqODE_0}. We define the auxiliary randomized Runge-Kutta (ARRK) scheme as 
\begin{align}\label{aux_rk_1}
    \begin{split}
	        &\bar y_0^0= y^0_0=y^{-1}_N=\varphi(0),\\
	        &\bar y_{k+1}^0=\bar y_k^0+h\cdot g_0(\theta_{k+1}^0,\bar y_k^0+h_{k+1}^0\cdot g_0(t_k^0,y_k^0)\big), \ k=0,1,\ldots, N-1.      
    \end{split}
\end{align}
as $g_0(t,x)=f(t,x,\varphi(t-\tau))$, for all $k=0,\ldots,N$ we have that $\bar y_k^0=y_k^0$. That is to say, the ARRK scheme coincides with RRK scheme at $j=0$.

From Lemma \ref{lem:gjholder} it has been shown that $g_0$ is $\min\{\gamma,\alpha\}$-H\"older continuous in time. Moreover, by \ref{ass:A1}, \ref{ass:A2} and \ref{ass:A3'} we have that $g_0$ is Borel measurable, and for all $t\in [0,\tau]$ and  $x,y\in\R^d$ 
\begin{align}
    \begin{split}
        &|g_0(t,x)-g_0(t,y)|\leq L|x-y|. 
    \end{split}
\end{align}
Hence, by Theorem \ref{sol_dde_prop_1} and by using analogous arguments as in the proof of Theorem 5.2 in \cite{RKYW2017}, which guarantees \eqref{error_main_rk_thm},
where {$C_{p,0}$} does not depend on $N$. 

Let us now assume that there exists $l\in\{0,1,\ldots,n-1\}$ for which there exists  { $C_{p,l} \in (0,+\infty)$} such that for all $N\geq \lceil \tau\rceil$
\begin{equation}
\label{ind_assumpt_1}
    \Bigl\|\max\limits_{0\leq i\leq N}|\phi_l(t_i^0)-y_i^l|\Bigl\|_{p}\leq  { C_{p,l}} h^{\alpha^l \rho}.
\end{equation}

We consider the following initial-value problem
\begin{equation}
\label{eqODE_l_1}
\begin{cases}
\phi_{l+1}'(t)=g_{l+1}(t,\phi_{l+1}(t)), & t\in [0,\tau], \\
\phi_{l+1}(0)=\phi_l({\tau}), 
\end{cases}
\end{equation}
with $g_{l+1}(t,x)=f(t+(l+1)\tau,x,\phi_l(t))$.
From Lemma \ref{lem:gjholder} we know that $g_{l+1}(\cdot,\phi_{l+1}(\cdot))$ is $\min\{\gamma,\alpha\}$-H\"older continuous.

{We define the ARRK scheme as follows
\begin{align} \label{eqn:aux_arrk_l+1}
    \begin{split}             
	       &\bar y_0^{l+1}= y^{l+1}_0=y^{l}_N,\\
	        &\bar y_{k+1}^{l+1}=\bar y_k^{l+1}+
         h\cdot g_{l+1}\big(\delta_{k+1}^{l+1},\bar y_k^{l+1}+
         h_{k+1}^{l+1} \cdot g_{l+1}(t^{0}_k,\bar y_k^{l+1})\big), \ k=0,1,\ldots, N-1.  
    \end{split}
\end{align}
}

From the definition it follows that $\bar y_k^j$, $j=0,1,\ldots,n$, $k=0,1,\ldots,N$, is measurable with respect to the {$\sigma$-field} generated by 
\eqref{sig_field1}, as well as is $y_k^j$. Moreover,  $\bar y^{l+1}_i$ approximates $\phi_{l+1}$ at $t_i^0$, despite $\{\bar y_i^{l+1}\}_{i=0,1,\ldots,N}$ is not implementable. We use $\bar y^{l+1}_i$ only in order to estimate the error \eqref{error_main_rk_thm} of $y^{l+1}_i$, since  it holds
\begin{equation}\label{error_decomp}
\begin{aligned}
\Bigl\|\max\limits_{0\leq i\leq N}|\phi_{l+1}(t_i^0)-y_i^{l+1}|\Bigl\|_{p} &\leq \Bigl\|\max\limits_{0\leq i\leq N}|\phi_{l+1}(t_i^0)-\bar y_i^{l+1}|\Bigl\|_{p} \\
&\quad+\Bigl\|\max\limits_{0\leq i\leq N}|\bar y_i^{l+1}-y_i^{l+1}|\Bigl\|_{p}.
\end{aligned}
\end{equation}

Firstly, we estimate $\displaystyle{\Bigl\|\max\limits_{0\leq i\leq N}|\bar y_i^{l+1}-y_i^{l+1}|\Bigl\|_{p}}$. For $k\in\{1,\ldots,N\}$ we get
\begin{align*}
&\bar y^{l+1}_k-y^{l+1}_k \\
&= \sum\limits_{j=1}^k(\bar y^{l+1}_j-\bar y^{l+1}_{j-1})-\sum\limits_{j=1}^k(y^{l+1}_j-y^{l+1}_{j-1})\\
&=h\sum\limits_{j=1}^k\Bigl(f\big(\theta_j^{l+1},{\bar y_{j-1}^{l+1}+h_{j}^{l+1}  f(t^{l+1}_{j-1},\bar y_{j-1}^{l+1},\phi_{l}(t^{0}_{j-1}))},\phi_{l}(\delta_{j-1}^{l+1})\big)\\
&\quad -f\big(\theta_j^{l+1},{y_{j-1}^{l+1}+h_{j}^{l+1}  f(t^{l+1}_{j-1}, y_{j-1}^{l+1},y_{j-1}^{l}),y_{j-1}^{l}+h_{j}^{l+1}  f(t^{l}_{j-1}, y_{j-1}^{l},y_{j-1}^{l-1})}\big)\Bigr),
\end{align*}
which, by using \eqref{eqn:f_assum_xz} twice, gives
{
\begin{align*}
    &|\bar y^{l+1}_k-y^{l+1}_k |\\
&\leq h L \sum\limits_{j=1}^k \big( (1+hL)|\bar y^{l+1}_{j-1}-y^{l+1}_{j-1} |+hL|\phi_{l}(t^0_{j-1})-y_{j-1}^{l}|^\alpha\big)\\
&\quad +h L \sum\limits_{j=1}^k   |\phi_{l}(\delta_{j-1}^{l+1})-y^{l}_{j-1}-h_{j}^{l+1}  f(t^{l}_{j-1}, y_{j-1}^{l},y_{j-1}^{l-1}) |^{\alpha}\\
&\leq  h L(1+L) \sum\limits_{j=1}^k |\bar y^{l+1}_{j-1}-y^{l+1}_{j-1} |+h^2L^2\sum\limits_{j=1}^k|\phi_{l}(t_{j-1}^0)-y_{j-1}^{l}|^\alpha\\
&\quad +h L \sum\limits_{j=1}^k   \Big|\phi_{l}(t^0_{j-1})-y^{l}_{j-1}\\
&\qquad\qquad+\int_{t_{j-1}^l}^{t_{j-1}^l+h_{j}^{l+1}}\big(f(s,\phi_{l}(s-l\tau),\phi_{l-1}(s-l\tau) )-f(t^{l}_{j-1}, y_{j-1}^{l},y_{j-1}^{l-1})\big)\mathrm{d}s \Big|^{\alpha},
\end{align*}

where for the last term we use the expression of the initial-value problem \eqref{eqODE_l_1}. Note that $|a+b|^\alpha\leq |a|^\alpha+|b|^\alpha$ for $\alpha\in (0,1]$. Now using \eqref{eqn:f_assum_xz}, [regularity of f wrt time], and Theorem \ref{sol_dde_prop_1} to estimate the last term, one can obtain that
\begin{equation}\label{eqn:estimate_forj+1}
    \begin{split}
       &\Big|\phi_{l}(t^0_{j-1})-y^{l}_{j-1}+\int_{t_{j-1}^l}^{t_{j-1}^l+h_{j}^{l+1}}\big(f(s,\phi_{l}(s-l\tau),\phi_{l-1}(s-l\tau) )-f(t^{l}_{j-1}, y_{j-1}^{l},y_{j-1}^{l-1})\big)\mathrm{d}s \Big|^{\alpha}\\
   &\leq |\phi_{l}(t^0_{j-1})-y^{l}_{j-1}|^\alpha+h^\alpha \big|f(t^{l}_{j-1}, \phi_{l}(t^0_{j-1}),\phi_{l-1}(t^0_{j-1}))-f(t^{l}_{j-1}, y_{j-1}^{l},y_{j-1}^{l-1})\big|^\alpha \\
   &\quad+\Big|\int_{t_{j-1}^l}^{t_{j-1}^l+h_{j}^{l+1}}\big(f(s,\phi_{l}(s-l\tau),\phi_{l-1}(s-l\tau) )-f(t^{l}_{j-1}, \phi_{l}(t^0_{j-1}),\phi_{l-1}(t^0_{j-1})\big)\mathrm{d}s \Big|^{\alpha}\\
   &\leq |\phi_{l}(t^0_{j-1})-y^{l}_{j-1}|^\alpha+L^\alpha h^\alpha \big( | \phi_{l}(t^0_{j-1})-y_{j-1}^{l}|^\alpha+|\phi_{l-1}(t^0_{j-1})-y_{j-1}^{l-1}|^{\alpha^2}\big)\\
   &\quad +L^\alpha h^\alpha \big((1+K_l+K_{l-1})^\alpha h^{\gamma \alpha}+K^\alpha (1+K_{l})^\alpha(1+K_{l-1})^\alpha h^{\alpha}\\
   &\qquad \qquad \qquad+K^{\alpha^2} (1+K_{l-1})^{\alpha^2}(1+K_{l-2})^{\alpha^2} h^{\alpha^2}\big).
    \end{split}
\end{equation}

Thus, overall,
\begin{align*}
        &|\bar y^{l+1}_k-y^{l+1}_k |\\
        &\leq h L(1+L) \sum\limits_{j=1}^k |\bar y^{l+1}_{j-1}-y^{l+1}_{j-1} |+Lh(1+L^\alpha h^\alpha+Lh)\sum\limits_{j=1}^k|\phi_{l}(t^0_{j-1})-y_{j-1}^{l}|^\alpha\\
        &\quad +L^{1+\alpha} h^{1+\alpha} \sum\limits_{j=1}^k  |\phi_{l-1}(t^0_{j-1})-y_{j-1}^{l-1}|^{\alpha^2}\\
        &\quad+L^{1+\alpha }h^{1+\alpha}(1+K)^\alpha \Big(\prod_{m=l-2}^l (1+K_{m})^\alpha\Big)\sum\limits_{j=1}^k \big(h^{\gamma \alpha}+h^{\alpha}+h^{\alpha^2})\\
        &\leq h L(1+L) \sum\limits_{j=1}^k \max\limits_{0\leq i\leq j-1} |\bar y^{l+1}_i-y^{l+1}_i| + L\tau (1+2L)\max\limits_{0\leq i\leq N}|\phi_{l}(t_i^0)-y_i^l|^{\alpha}\\
        &\quad + L^{1+\alpha} \tau h^\alpha \max\limits_{0\leq i\leq N}|\phi_{l-1}(t_i^0)-y_i^{l-1}|^{\alpha^2}\\
        &\quad+3L^{1+\alpha }\tau (1+K)^\alpha \Big(\prod_{m=l-2}^l (1+K_{m})^\alpha\Big){h^{\alpha (\min\{\gamma, \alpha\} +1)}}. 
\end{align*}

Taking the $L_p$-norm on both sides gives that
\begin{align}\label{eqn:error2lp}
\begin{split}
        &\mathbb{E}\Bigl(\max\limits_{0\leq i\leq k}|\bar y^{l+1}_i-y^{l+1}_i|^p\Bigr)\leq c_p h^p L^p\mathbb{E}\Bigl(\sum\limits_{j=1}^k \max\limits_{0\leq i\leq j-1} |\bar y^{l+1}_i-y^{l+1}_i|\Bigr)^p \\
    &\qquad +c_p  L^p\tau^p (1+2L)^p\mathbb{E}\Bigl[\max\limits_{0\leq i\leq N}|\phi_{l}(t_i^0)-y_i^l|^{\alpha p}\Bigr] \\
       &\qquad +c_p   L^{(1+\alpha)p} \tau^p h^{\alpha p}\mathbb{E}\Bigl[\max\limits_{0\leq i\leq N}|\phi_{l-1}(t_i^0)-y_i^{l-1}|^{\alpha^2 p}\Bigr] \\ 
       &\qquad +c_p { 3^p} L^{(1+\alpha)p }\tau^p (1+K)^{\alpha p} \Big(\prod_{m=l-2}^l (1+K_{m})^\alpha\Big)^p {h^{\alpha (\min\{\gamma, \alpha\} +1)p}}.
\end{split}
\end{align}

By Jensen inequality and \eqref{ind_assumpt_1} we have 
\begin{equation*}
    \mathbb{E}\Bigl[\max\limits_{0\leq i\leq N}|\phi_{l}(t_i^0)-y_i^l|^{\alpha p}\Bigr]\leq\Biggl( \mathbb{E}\Bigl[\max\limits_{0\leq i\leq N}|\phi_{l}(t_i^0)-y_i^l|^{p}\Bigr]\Biggr)^{\alpha}\leq  {C_{p,l}^{\alpha p}} h^{\alpha^{l+1}\rho p},
\end{equation*}
and 
\begin{equation*}
    \mathbb{E}\Bigl[\max\limits_{0\leq i\leq N}|\phi_{l-1}(t_i^0)-y_i^{l-1}|^{\alpha^2 p}\Bigr]\leq {C_{p, l-1}^{\alpha^2 p}} h^{\alpha^{l+1}\rho p}.
\end{equation*}

Note that via the H\"older inequality we get, for $k=1,2,\ldots,N$ and all positive numbers $a_1,a_2,\ldots,a_k$, that
\begin{equation}
\label{h_ineq_app}
    h^p\Bigl (\sum\limits_{j=1}^k a_k\Bigr)^p\leq\tau^{p-1} h \sum\limits_{j=1}^k a_j^p,
\end{equation}

and hence
\begin{equation}\label{est_2}
    h^p\mathbb{E}\Bigl[\sum\limits_{j=1}^k \max\limits_{0\leq i\leq j-1} |\bar y^{l+1}_i-y^{l+1}_i|\Bigr]^p 
    \leq\tau^{p-1} h  \sum\limits_{j=1}^k\mathbb{E}\Bigl[ \max\limits_{0\leq i \leq j-1}|\bar y_i^{l+1}-y_i^{l+1}|^p \Bigr].
\end{equation}
Finally, substituting all the estimates above into \eqref{eqn:error2lp} gives
\begin{align}\label{eqn:error2_final2}
    \begin{split}
                &\mathbb{E}\Bigl(\max\limits_{0\leq i\leq k}|\bar y^{l+1}_i-y^{l+1}_i|^p\Bigr)\\
                &\leq c_p \tau^{p-1} L^p h\sum\limits_{j=1}^k\mathbb{E}\Bigl[ \max\limits_{0\leq i \leq j-1}|\bar y_i^{l+1}-y_i^{l+1}|^p \Bigr] +{\bar C_{1,p,l+1}} h^{\alpha^{l+1}\rho p} .
    \end{split}
\end{align}

{where $\bar C_{1,p,l+1}$ is a generic constant, for any $l\in\{0,1,\ldots,n-1\}, p\in [2,\infty)$}. \\
Now applying Gronwall inequality to \eqref{eqn:error2_final2} gives that
\begin{equation}
    \mathbb{E}\Bigl(\max\limits_{0\leq i\leq k}|\bar y^{l+1}_i-y^{l+1}_i|^p\Bigr)\leq   {C_{1,p,l+1}} h^{\alpha^{l+1}\rho p}.
    \label{est_diff_ytoybar_l+1}
\end{equation}
}

We now establish an upper bound on  $\displaystyle{\Bigl\|\max\limits_{0\leq i\leq N}|\phi_{l+1}(t_i^0)-\bar y_i^{l+1}|\Bigl\|_{p}}$. For $k\in\{1,2,\ldots,N\}$ we have
\begin{align}
\begin{split}
          &\phi_{l+1}(t_k^0)-\bar y^{l+1}_k\\
        =&\phi_{l+1}(0)-\bar y_0^{l+1}+(\phi_{l+1}(t_k^0)-\phi_{l+1}(t_0^0))-(\bar y^{l+1}_k-\bar y^{l+1}_0)\\
        =&(\phi_l(t_N^0)-y_N^l)+\sum\limits_{j=1}^k(\phi_{l+1}(t_j^0)-\phi_{l+1}(t_{j-1}^0))-\sum\limits_{j=1}^k(\bar y^{l+1}_j-\bar y^{l+1}_{j-1})\\
        =&(\phi_l(t_N^0)-y_N^l)+\sum\limits_{j=1}^k\Bigl(\int\limits_{t_{j-1}^0}^{t_{j}^0}g_{l+1}(s,\phi_{l+1}(s))\de s \\
        &~~~~~~~~~~~~~~~~~~~~~~~
        -h\cdot g_{l+1}\big(\delta_{j}^{l+1},\bar y_{j-1}^{l+1}+
         h_{j}^{l+1} \cdot g_{l+1}(t^{0}_{j-1},\bar y_{j-1}^{l+1})\big)\Bigr)\\
        =&(\phi_l(t_N^0)-y_N^l)+S^k_{1,l+1}+S^k_{2,l+1}+S^k_{3,l+1},
\end{split}
\end{align}
where
{
\begin{align*}
S^k_{1,l+1} 
&\udef \sum\limits_{j=1}^k \left(\int\limits_{t_{j-1}^0}^{t_{j}^0}g_{l+1}(s,\phi_{l+1}(s))\de s-h\cdot  g_{l+1}(\delta_j^{l+1},\phi_{l+1}(\delta_j^{l+1}))\right), \\
S^k_{2,l+1} 
&\udef h\sum\limits_{j=1}^k\Bigl(g_{l+1}(\delta_j^{l+1},\phi_{l+1}(\delta_j^{l+1}))\\
&~~~~~~~~~~
-g_{l+1}(\delta_j^{l+1}, \phi_{l+1}(t^0_{j-1})+h_{j}^{l+1} g_{l+1}(t^0_{j-1},\phi_{l+1}(t^0_{j-1})))\Bigr), \\
S^k_{3,l+1}
&\udef h\sum\limits_{j=1}^k\Bigl(
g_{l+1}(\delta_j^{l+1}, \phi_{l+1}(t^0_{j-1})+h_{j}^{l+1} g_{l+1}(t^0_{j-1},\phi_{l+1}(t^0_{j-1}))) \\
&~~~~~~~~~~
-g_{l+1}(\delta_j^{l+1},\bar y_{j-1}^{l+1}+h_{j}^{l+1} g_{l+1}(t_{j-1}^{0},\bar y_{j-1}^{l+1}))\Bigr).
\end{align*}}
Since $g_{l+1}(\cdot,\phi_{l+1}(\cdot))$ is $\min\{\gamma,\alpha\}$-H\"older continuous from Lemma \ref{lem:gjholder} and by \eqref{equ:supg} and   \eqref{euq:diffgsandt} we get that
{
\begin{equation}
\label{est_S1}
        \Bigl\|\max\limits_{1\leq k\leq N}|S^k_{1,l+1}|\Bigl\|_{p} \leq c_{p}\sqrt{\tau}\|g_{l+1}(\cdot,\phi_1(\cdot))\|_{C^{\min\{\gamma,\alpha\}}([0,\tau])} {h^\rho}=:
        {C_{S_1,l+1}}
        {h^\rho}.
\end{equation}}
Then, by above inequity and Theorem \ref{sol_dde_prop_1} we get
{
\begin{align}
       |S^k_{2,l+1}|&\leq Lh\sum\limits_{j=1}^k \Bigl|\phi_{l+1}(\delta_j^{l+1})-\phi_{l+1}(t_{j-1}^0) - h_{j}^{l+1} g_{l+1}(t^0_{j-1},\phi_{l+1}(t^0_{j-1}))\Bigr| \notag\\
&\leq Lh\sum\limits_{j=1}^k \int^{t^0_{j-1}+h_{j}^{l+1}}_{t^0_{j-1}}\Bigl|g_{l+1}(s,\phi_{l+1}(s)) - g_{l+1}(t^0_{j-1},\phi_{l+1}(t^0_{j-1}))\Bigr| \de s \notag\\
& \leq Lh\sum\limits_{j=1}^k \int^{t^0_{j-1}+h_{j}^{l+1}}_{t^0_{j-1}}\Bigl|C_{g(\cdot),{l+1}}|s-t^0_{j-1}|^{\min\{\gamma,\alpha\}}\Bigr| \de s \notag\\
& \leq C_{g(\cdot),{l+1}} Lh\sum\limits_{j=1}^k {\min\{\gamma,\alpha\}}^{-1}\cdot (h_{j}^{l+1})^{\min\{\gamma,\alpha\} + 1}\notag \\
& \leq C_{g(\cdot),{l+1}} {\min\{\gamma,\alpha\}}^{-1} LhN h^{\min\{\gamma,\alpha\} + 1}\notag \\
& {=: C_{S_{2}, l+1}} h^{\min\{\gamma,\alpha\} + 1}. 
        \label{est_S2}
\end{align}}

Moreover, for the last term $ S^k_{3,l+1}$, by using \eqref{eqn:f_fulldiff} , we get 
\begin{equation}
\label{est_S3}
\begin{aligned}
S^k_{3,l+1} 
&\leq hL\sum\limits_{j=1}^k\Bigl(
\phi_{l+1}(t^0_{j-1})+h_{j}^{l+1} g_{l+1}(t^0_{j-1},\phi_{l+1}(t^0_{j-1}))-
\bar y_{j-1}^{l+1}-h_{j}^{l+1} g_{l+1}(t_{j-1}^{0},\bar y_{j-1}^{l+1})\Bigr)\\
&\leq h L \sum\limits_{j=1}^k\Bigl(
\phi_{l+1}(t^0_{j-1})-
\bar y_{j-1}^{l+1} + h_{j}^{l+1} L (\phi_{l+1}(t^0_{j-1}) - \bar y_{j-1}^{l+1})\Bigr)\\
&\leq h L \sum\limits_{j=1}^k\Bigl(
 (h_{j}^{l+1} L +1)(\phi_{l+1}(t^0_{j-1}) - \bar y_{j-1}^{l+1})\Bigr)\\
 &\leq h L  (L +1)\sum\limits_{j=1}^k\Bigl(
(\phi_{l+1}(t^0_{j-1}) - \bar y_{j-1}^{l+1})\Bigr) 
\end{aligned}
\end{equation}

{Therefore, for all $k\in\{1,2,\ldots,N\}$ the following inequality holds
\begin{align}
\mathbb{E}\Bigl[\max\limits_{0\leq i\leq k}|\phi_{l+1}(t_i^0)-\bar y^{l+1}_i|^p\Bigr] & \leq \mathbb{E}\Bigl[
\max\limits_{0\leq i\leq k}|
(\phi_l(t_N^0)-y_N^l)+S^k_{1,l+1}+S^k_{2,l+1}+S^k_{3,l+1}
|^p
]\notag\\
&\leq {c_p}\Bigl(\mathbb{E}|\phi_l(t_N^0)-y_N^l|^p + \mathbb{E}\Bigl[\max\limits_{1\leq k\leq N}|S^k_{1,l+1}|^p\Bigr]+\mathbb{E}\Bigl[\max\limits_{1\leq k\leq N}|S^k_{2,l+1}|^p\Bigr]\Bigr) \notag \notag\\
&+c_p \mathbb{E}\Bigl[\max\limits_{1\leq k\leq N}|S^k_{3,l+1}|^p\Bigr] \notag
\\&\leq {c_p}\Bigl({C_{p,l}^p} h^{\alpha^l \rho p} + {C_{S_1,l+1}^{p} h^{p\rho}}+ { C_{S_2,l+1}^{p} h^{p(\min\{\gamma,\alpha\} + 1)}} \Bigr)\notag\\
&+{c_p} \mathbb{E}\Bigl[ (h L (L+1) \sum\limits_{j=1}^k |\phi_{l+1}(t_{j-1}^0)-\bar y^{l+1}_{j-1}|)^p \Bigr] \label{equ:S3step2label1}
\\&\leq {\bar C_{2,p,l+1}} h^{\alpha^l \rho p} +
c_p^{l+1} L^p (L+1)^p \tau^{p-1} h  \sum\limits_{j=1}^k \mathbb{E}\Bigl[|\phi_{l+1}(t_{j-1}^0)-\bar y^{l+1}_{j-1}|^p \Bigr]. \label{equ:S3step2label2},
\end{align}}

{where $\bar C_{2,p,l+1}$ is a generic constant, for any $l\in\{0,1,\ldots,n-1\}, p\in [2,\infty)$}.\\
{In inequality \eqref{equ:S3step2label1}, we utilize \eqref{ind_assumpt_1}, \eqref{est_S1}, \eqref{est_S2}, and \eqref{est_S3}. Then, in inequality \eqref{equ:S3step2label2}, we obtain the result by invoking \eqref{h_ineq_app}.}
{
By using  Gronwall's lemma,  we get for all $k\in\{1,2,\ldots,N\}$
\begin{align*}
\mathbb{E}\Bigl[\max\limits_{0\leq i\leq k}|\phi_{l+1}(t_i^0)-\bar y^{l+1}_i|^p\Bigr] 
& \leq {\bar C_{2,p,l+1}} h^{\alpha^l \rho p}\exp(c_p L^p (L+1)^p \tau^{p-1} h N)\\
& \leq {\bar C_{2,p,l+1}} h^{\alpha^l \rho p}\exp(c_p L^p (L+1)^p \tau^{p}),
\end{align*}

which gives
\begin{equation}
\label{error_phi_byk2}
    \Bigl\|\max\limits_{0\leq i\leq N}|\phi_{l+1}(t_i^0)-\bar y_i^{l+1}|\Bigl\|_{p}\leq { C_{2,p,l+1}} h^{\alpha^l \rho}.
\end{equation}}
Combining \eqref{error_decomp}, \eqref{est_diff_ytoybar_l+1}, and \eqref{error_phi_byk2} we finally obtain
{
\begin{equation}
     \Bigl\|\max\limits_{0\leq i\leq N}|\phi_{l+1}(t_i^0)- y_i^{l+1}|\Bigl\|_{p}\leq { C_{p,l+1}} h^{\alpha^{l+1} \rho},
\end{equation}}

which ends the inductive part of the proof. Finally,  $\phi_{l+1}(t_i^0)=\phi_{l+1}(ih)=x(ih+(l+1)\tau)=x(t_i^{l+1})$ and the proof of \eqref{error_main_rk_thm} is finished. 
\end{proof}

\section{Numerical experiments}
\subsection{The implementation}
The implementation of the randomized Runge-Kutta method is not straightforward. To evaluate the solution at time point $t^j_i=ih+j\tau$ within the interval $[j\tau,(j+1)\tau]$ for $j\geq 1$, we need four steps:

{\bf Step (j1):} First simulate $\gamma \sim \mathcal{U}(0,1)$ and set a random stepsize $\gamma h\in [0,h]$ and a random time point $
    t^j_{i-1}+ \gamma h\in [t^j_{i-1},t^j_i]$.
    
{\bf Step (j2):} Following the second line of \eqref{eqn:rrk_j}, compute the intermediate delay term $\tilde y_{i}^{j-1,j}$ via the random stepsize, the time grid point and evaluations from the preceding $\tau$-intervals, namely, $t^{j-1}_{i-1}$ and $y_{i-1}^{j-1}$ over $[(j-1)\tau,j\tau]$, and $y_{i-1}^{j-2}$ within $[(j-2)\tau,(j-1)\tau]$.

{\bf Step (j3):} Following the third line of \eqref{eqn:rrk_j}, compute the intermediate term $\tilde y_{i}^{j}$ via the random stepsize, the time grid point and previous evaluations from the preceding or the current $\tau$-interval, namely, $t^{j}_{i-1}$ and $y_{i-1}^{j}$ within $[j\tau,(j+1)\tau]$, and $y_{i-1}^{j-1}$ over $[(j-1)\tau,j\tau]$.

{\bf Step (j4):} Following the last line of \eqref{eqn:rrk_j}, compute evaluation $y_{i}^{j}$ via the random time point, the preceding evaluation $y_{i-1}^{j}$, the intermediate term $\tilde y_{i}^{j}$ and the intermediate delay term $\tilde y_{i}^{j-1,j}$ from {\bf Step (j1)-Step (j3)}.

\begin{lstlisting}[caption={A sample implementation of \eqref{eqn:rrk_0} and \eqref{eqn:rrk_j} in
\textsc{Python}}, label=py:RandEM]
import numpy as np

def f(t,x,z):
    return [...]

def randEM_full(tau, initial_func, h, f,n_taus):
    # input:    delay lag tau, the initial function over [-tau, 0],  
    #            stepsize h, drift function f,
    #            number of intervals of length tau n_taus
    # output:   
    #  one trajectory of the randomized Runge-Kutta method

    ###to get numerical evaluation:

    #number of steps within one interval with length tau
    N=int(tau/h) 
                    
    #Initiate the solution 'matrix', where the first column 
    #corresponds to the delay conditions over [-tau, 0] 
    sol = np.zeros((N+ 1,n_taus+1))
    
    #setting the initial conditions on grid point over [-tau, 0] 
    grid_delay = - tau + h *np.arange(0, N) 
    sol[:,0] = np.array([initial_func(grid_delay[i])\
                         for i in range(len(grid_delay))])

    #for each interval [(j-1)*tau, j*tau]     
    for j in range(1, n_taus+1):   
        
        # collect the grid points over 	[(j-1)*tau, j*tau]  
        grid = (j-1)* tau + h *np.arange(0, N)  
        
        sol[0,j] = sol[-1,j-1]
        
        if j-1==0:
            for i in range(1,N+1):

                # step (j1):
                gamma=np.random.rand() 
                rand_time=grid[i-1]+gamma*h
                rand_h=gamma*h
            
                # step (j2):
                sol_delay_inter=initial_func(rand_time-tau)

                # step (j3):
                drift_inter=rand_h*f(grid[i-1], sol[i-1,j],\
                                     initial_func(grid[i-1]-tau))
                sol_inter = sol[i-1,j]+drift_inter 
            
                # step (j4):
                drift=h*f(rand_time, sol_inter, sol_delay_inter)
                sol[i,j] = sol[i-1,j]+drift               
        else:         
            for i in range(1,N+1):

                # step (j1):
                gamma=np.random.rand() 
                rand_time=grid[i-1]+gamma*h
                rand_h=gamma*h

                # step (j2):
                drift_delay=rand_h*f(grid[i-1]-tau, sol[i-1,j-1],\
                                     sol[i-1,j-2])
                sol_delay_inter = sol[i-1,j-1]+drift_delay

                # step (j3):
                drift_inter=rand_h*f(grid[i-1], sol[i-1,j],\
                                     sol[i-1,j-1])
                sol_inter = sol[i-1,j]+drift_inter 

                # step (j4):
                drift=h*f(rand_time, sol_inter, sol_delay_inter)
                sol[i,j] = sol[i-1,j]+drift 
    return sol
\end{lstlisting}

\subsection{Example 1} We modify \cite[Example 6.2]{kruse2017randomized} as follows:
 \begin{align}
  \label{eqn:DDEexample1}
  \begin{split}
    \begin{cases}
      \dot{u}(t) &= g(t)\cdot\big(u(t)+(1+|u(t-\tau)|)^\alpha\big), \quad t \in [0,3\tau],\\
      u(t) &= 1, \quad t \in [-\tau, 0],
    \end{cases}
  \end{split}
\end{align}

where  $g(t):=\left[- \frac{1}{10} \mbox{sgn}(\frac{1}{4}
T-t)-\frac{1}{5} \mbox{sgn}(\frac{1}{2} T-t)- \frac{7}{10}
\mbox{sgn}(\frac{3}{4}T-t)\right]$\ and 
 \begin{align}
  \mbox{sgn}(t):=
  \begin{split}
    \begin{cases}
      -1,& \ \mbox{if\ }t<0,\\
       0,& \ \mbox{if\ }t=0,\\      
       1,& \ \mbox{if\ }t>0.
    \end{cases}
  \end{split}
\end{align}

Here we have three jump points at $t=\frac{1}{4} T$, $t=\frac{1}{2} T$ and
$t=\frac{3}{4}T$. Note that when $\alpha=0$, our proposed method can recover the performance shown in \cite[Example 6.2]{kruse2017randomized}, with an order of convergence roughly $1.5$. 

In this example, we choose $\alpha=0.5$ and investigate the performance of our proposed method randomized Runge-Kutta method for solving such time-irregular DDE. The performance is further compared with the one of the randomized Euler method proposed in \cite{difonzo2024existence}, which is known to efficiently handle Carath\'eodory DDEs, in terms of convergence rate, accuracy, and computational efficiency. The reference solution is computed through the randomized Runge-Kutta method at stepsize $h_{\text{ref}}=2^{-15}$. 
We test the performance via $1000$ experiments for each $h=2^l$, $l=2,\ldots,7$, and record the mean-square error (MSE) and time cost against MSE for both methods respectively in Figure \ref{fig:f62}. 

It can be observed from Figure \ref{fig:alpha0.5,f62,rrk} and Figure \ref{fig:alpha0.5,f62,re} that the randomized Runge-Kutta method outperforms consistently over all intervals in terms of accuracy and the order of convergence. Clearly, due to the additional computation of $\tilde y_{k+1}^j$ and $\tilde y_{k+1}^{j,j-1}$ in \eqref{eqn:rrk_j}, the randomized Runge-Kutta method
is approximately three times as expensive as the randomized Runge-Kutta method with the same step size. But due to its better accuracy, the randomized Runge-Kutta method is superior for all the step sizes smaller than $2^{-3}$.
\begin{figure}
\centering
\begin{subfigure}{0.45\textwidth}
    \centering
    \includegraphics[width=\textwidth]{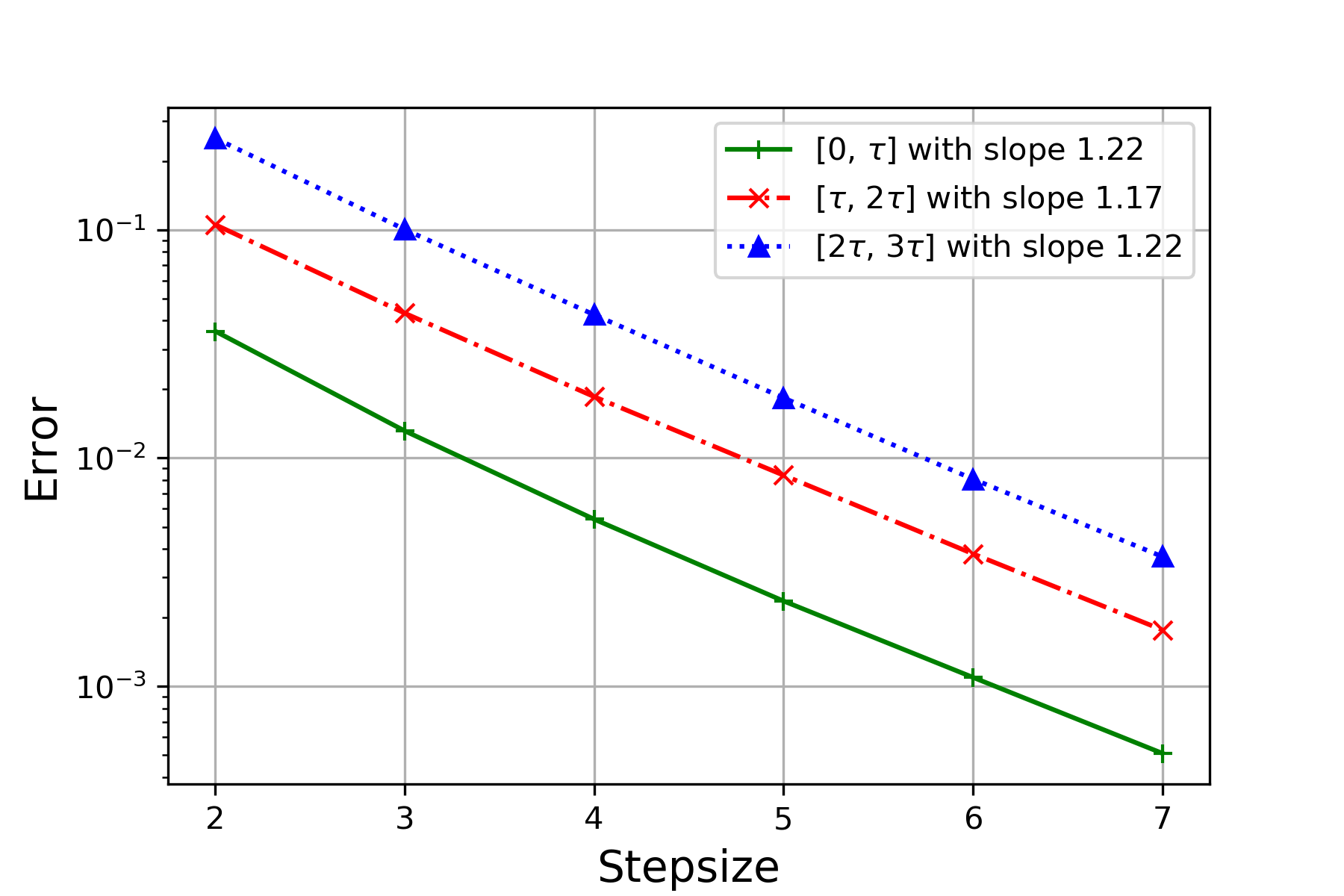}
    \caption{MSE of randomized Runge-Kutta.}
    \label{fig:alpha0.5,f62,rrk}
\end{subfigure}
\begin{subfigure}{0.45\textwidth}
    \centering
    \includegraphics[width=\textwidth]{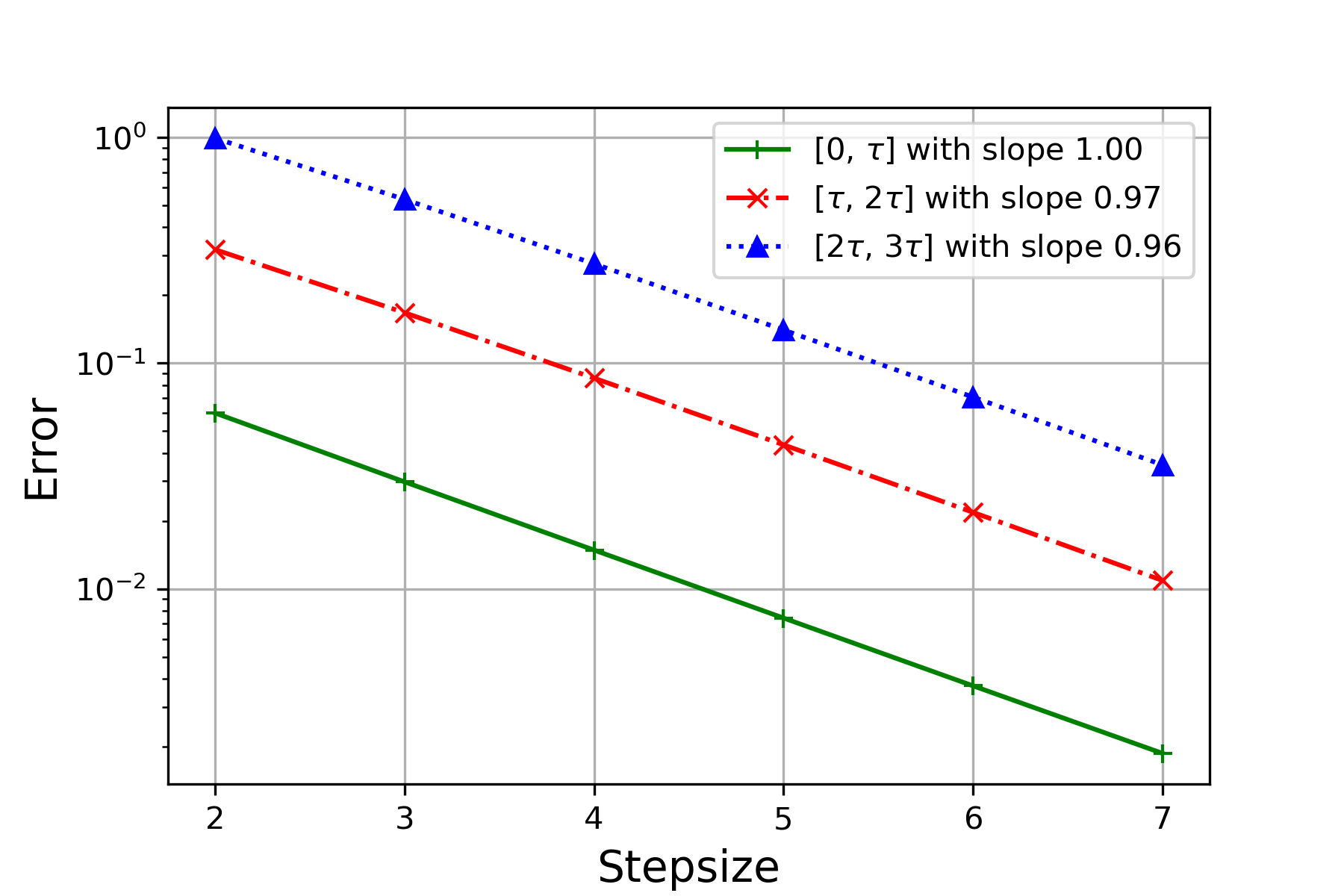}
    \caption{MSE of randomized Euler.}
    \label{fig:alpha0.5,f62,re}
\end{subfigure}\\
\begin{subfigure}{0.45\textwidth}
    \centering
    \includegraphics[width=\textwidth]{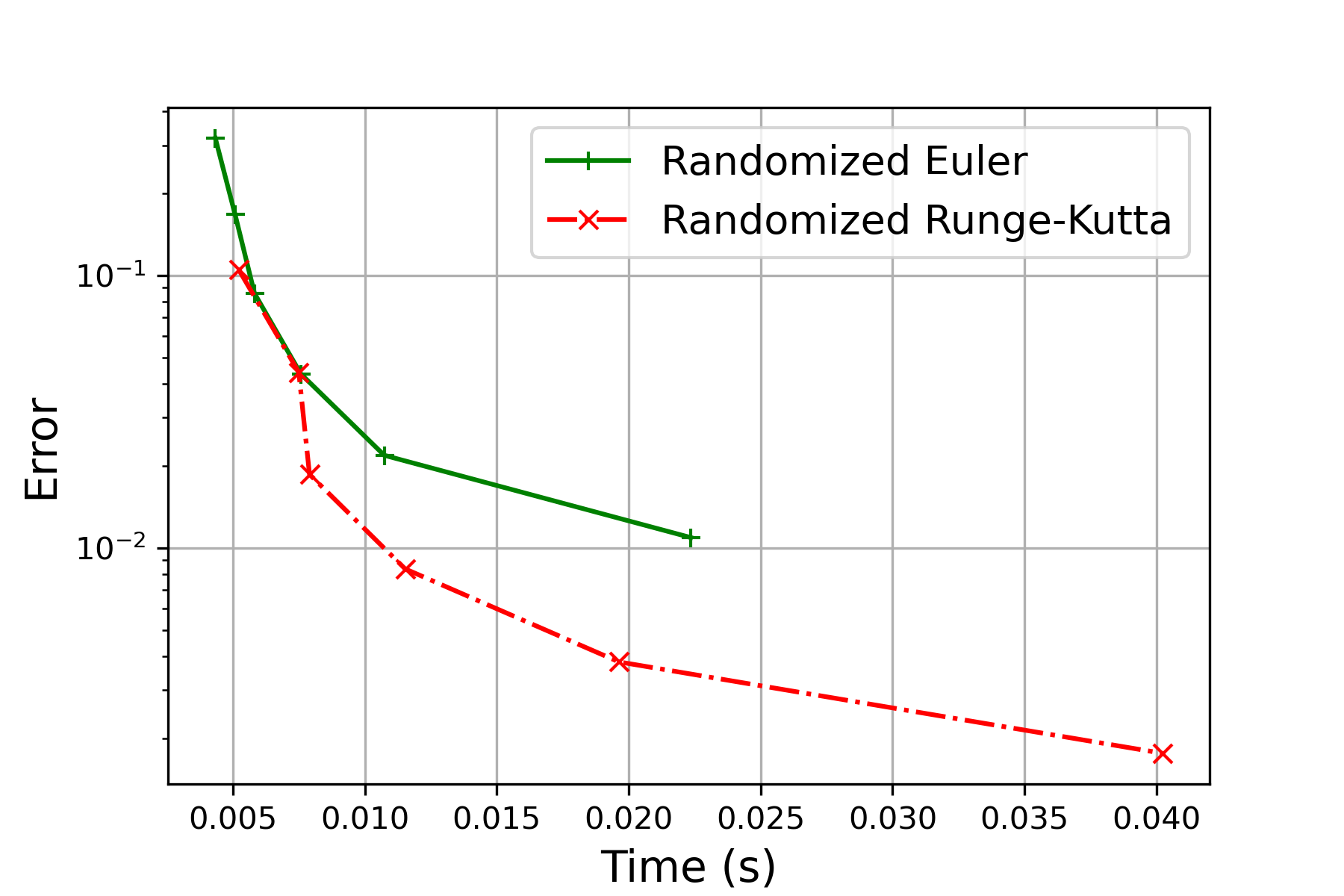}
    \caption{Time cost against MSE over $[\tau, 2\tau]$.}
    \label{fig:alpha0.5,f62,time,interval1}
\end{subfigure} \begin{subfigure}{0.45\textwidth}
    \centering
    \includegraphics[width=\textwidth]{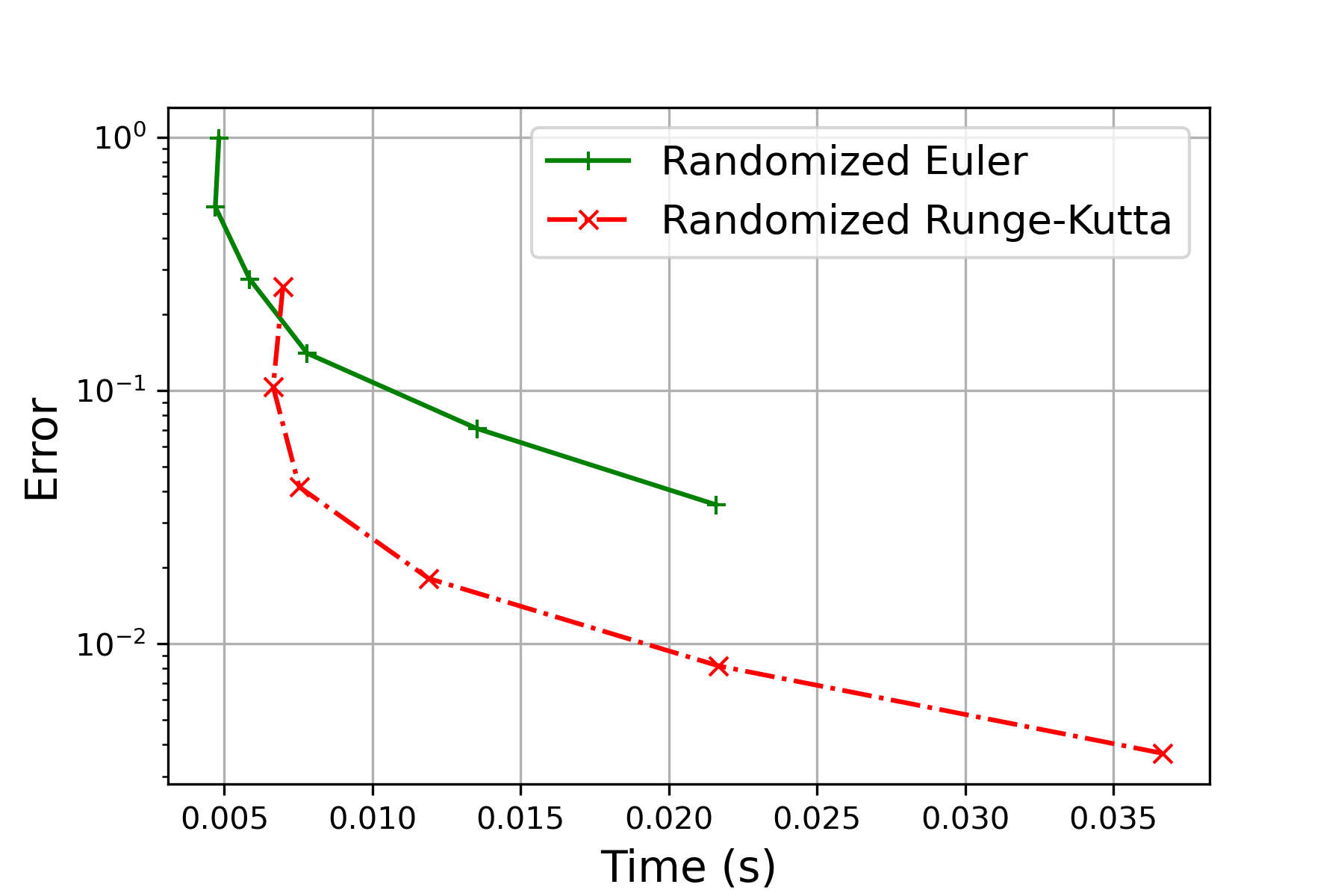}
    \caption{Time cost against MSE over $[2\tau, 3\tau]$.}
    \label{fig:alpha0.5,f62,time,interval2}
\end{subfigure}
\caption{The performance of solving DDE \eqref{eqn:DDEexample1} via randomized Runge-Kutta and randomised Euler.}
\label{fig:f62}
\end{figure}
\subsection{Example 2} To verify  \eqref{error_main_rk_thm} in Theorem \ref{rate_of_conv_expl_Eul}, we test the sensitivity of the performance of the randomized Runge-Kutta method with respect to parameters $\alpha, \gamma\in (0,1]$ on the following DDE:
 \begin{align}
  \label{eqn:DDEexample2}
  \begin{split}
    \begin{cases}
      \dot{u}(t) &= u(t)-|u(t-\tau)|^\alpha+|t|^\gamma, \quad t \in [0,3\tau],\\
      u(t) &= t+\tau, \quad t \in [-\tau, 0].
    \end{cases}
  \end{split}
\end{align}

Clearly DDE \eqref{eqn:DDEexample2}
satisfies all the assumptions of Theorem \ref{rate_of_conv_expl_Eul}. We choose the combinations 
$$(\alpha,\gamma)\in \{(0.1,0.1), (0.1,0.5), (0.5,0.1), (0.5,0.5), (0.5,1), (1,0.5)\}$$

For each pair, the reference solution is computed through the randomized Runge-Kutta method at stepsize $h_{\text{ref}}=2^{-16}$. 
We test the performance via $1000$ experiments for each $h=2^l$, $l=5,\ldots,10$, and record the mean-square error (MSE) and the negative MSE slope in Figure \ref{fig:newexample} and Table \ref{tab:DDEexample2} respectively. One can observe from Table \ref{tab:DDEexample2} that for each pair of $(\alpha, \gamma)$, the order of convergence over $[j\tau, (j+1)\tau]$ is consistently higher than the theoretical one for $j=0,1,2$, where the latter one is $\Big(\frac{1}{2}+(\gamma \wedge \alpha)\Big)\alpha^j$ in \eqref{error_main_rk_thm}. Particularly, it can been seen from Figure \ref{fig:alpha0.1,c0.1,new}, Figure \ref{fig:alpha0.5,c0.1,new} and  Figure \ref{fig:alpha0.1,c0.5,new} that, though orders of convergence from the three cases are similar over $[0,\tau]$, the ones over $[\tau,2\tau]$ and $[2\tau,3\tau]$ of Figure \ref{fig:alpha0.5,c0.1,new} are slightly higher than the other two, due to the larger value of $\alpha$. The result coincides with Theorem \ref{rate_of_conv_expl_Eul}.

\begin{figure}
\centering
\begin{subfigure}{0.32\textwidth}
    \centering
    \includegraphics[width=\textwidth]{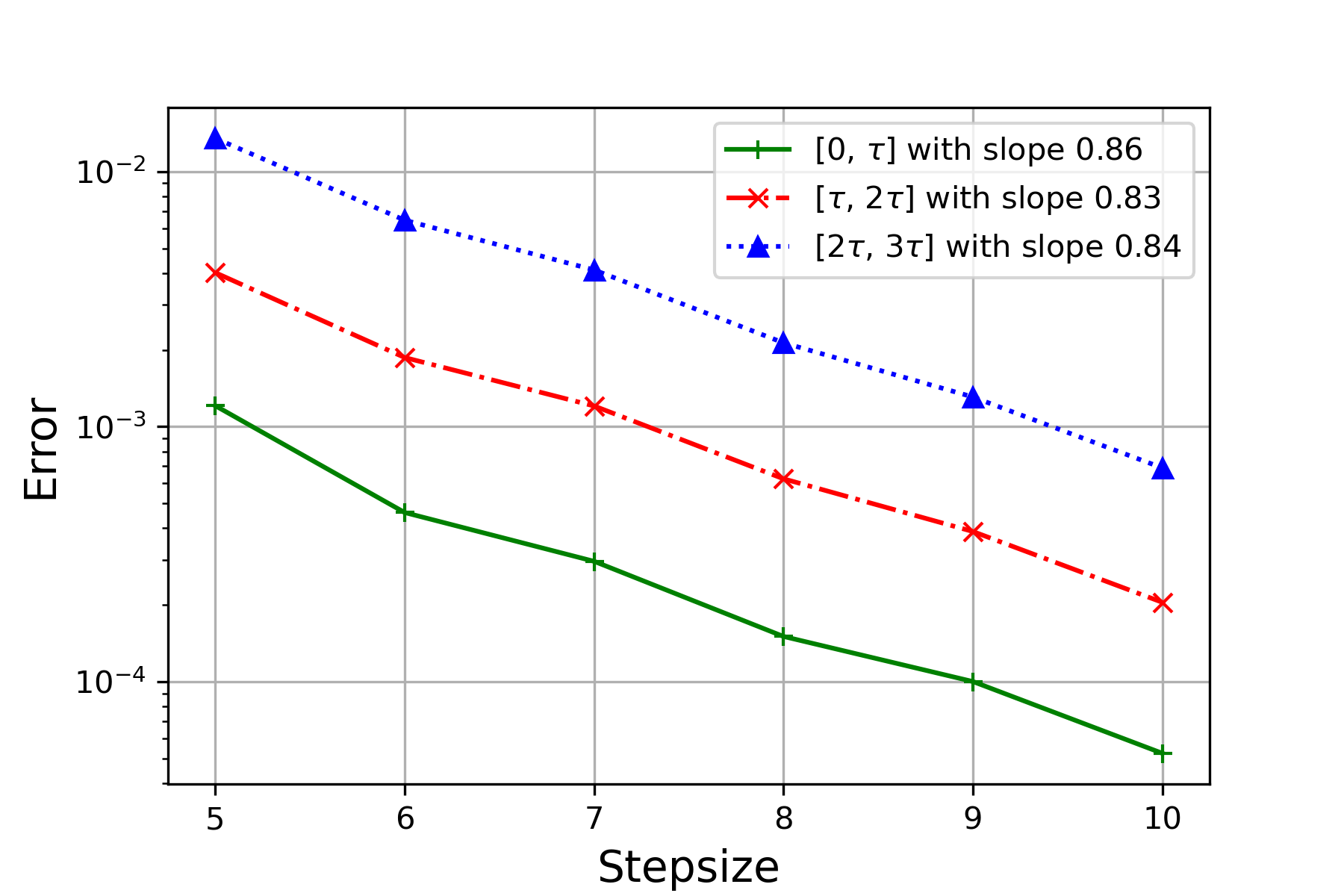}
    \caption{$(\alpha,\gamma)=(0.1,0.1)$}
    \label{fig:alpha0.1,c0.1,new}
\end{subfigure}
\begin{subfigure}{0.31\textwidth}
    \centering
    \includegraphics[width=\textwidth]{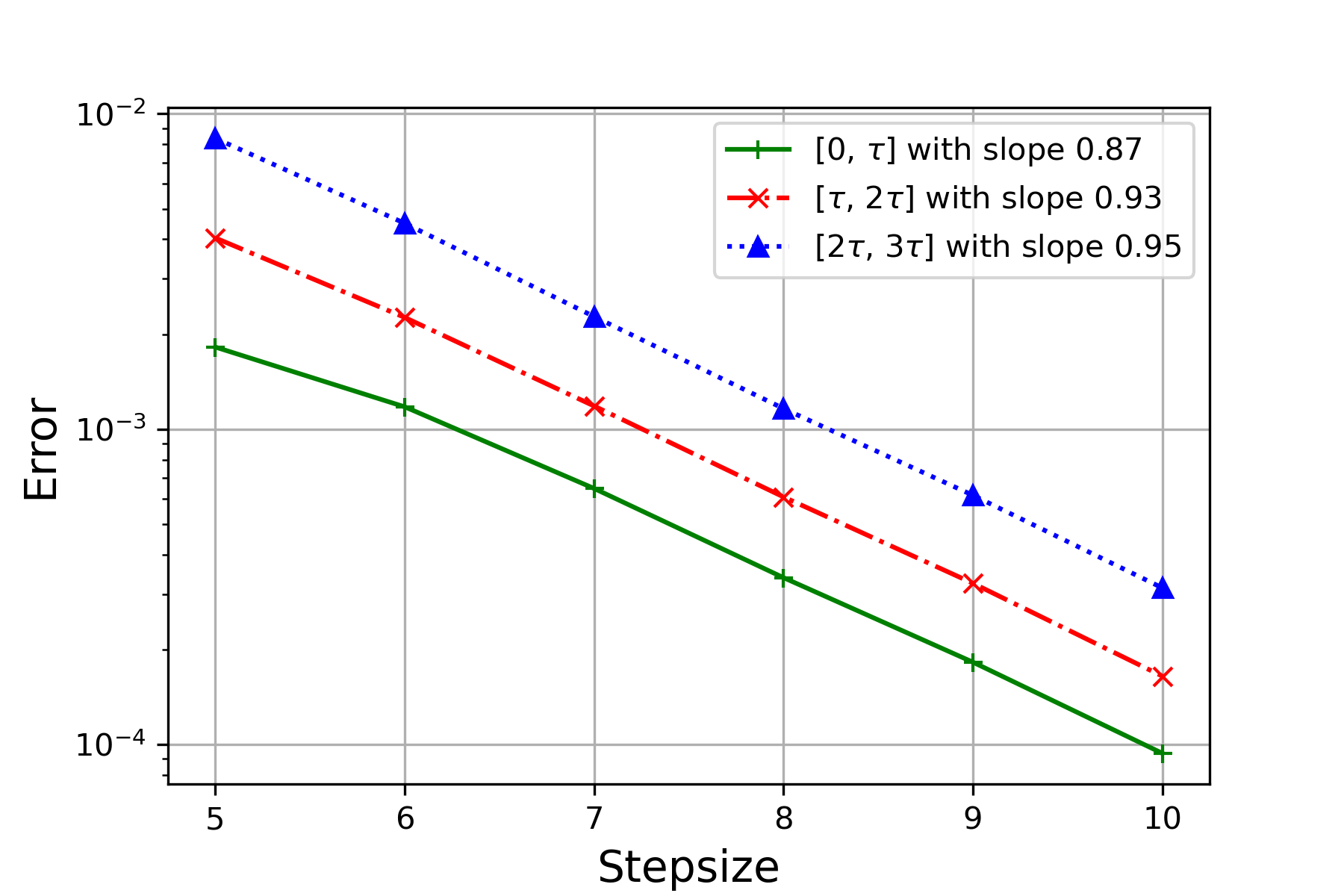}
    \caption{$(\alpha,\gamma)=(0.5,0.1)$.}
    \label{fig:alpha0.5,c0.1,new}
\end{subfigure}
\begin{subfigure}{0.31\textwidth}
    \centering
    \includegraphics[width=\textwidth]{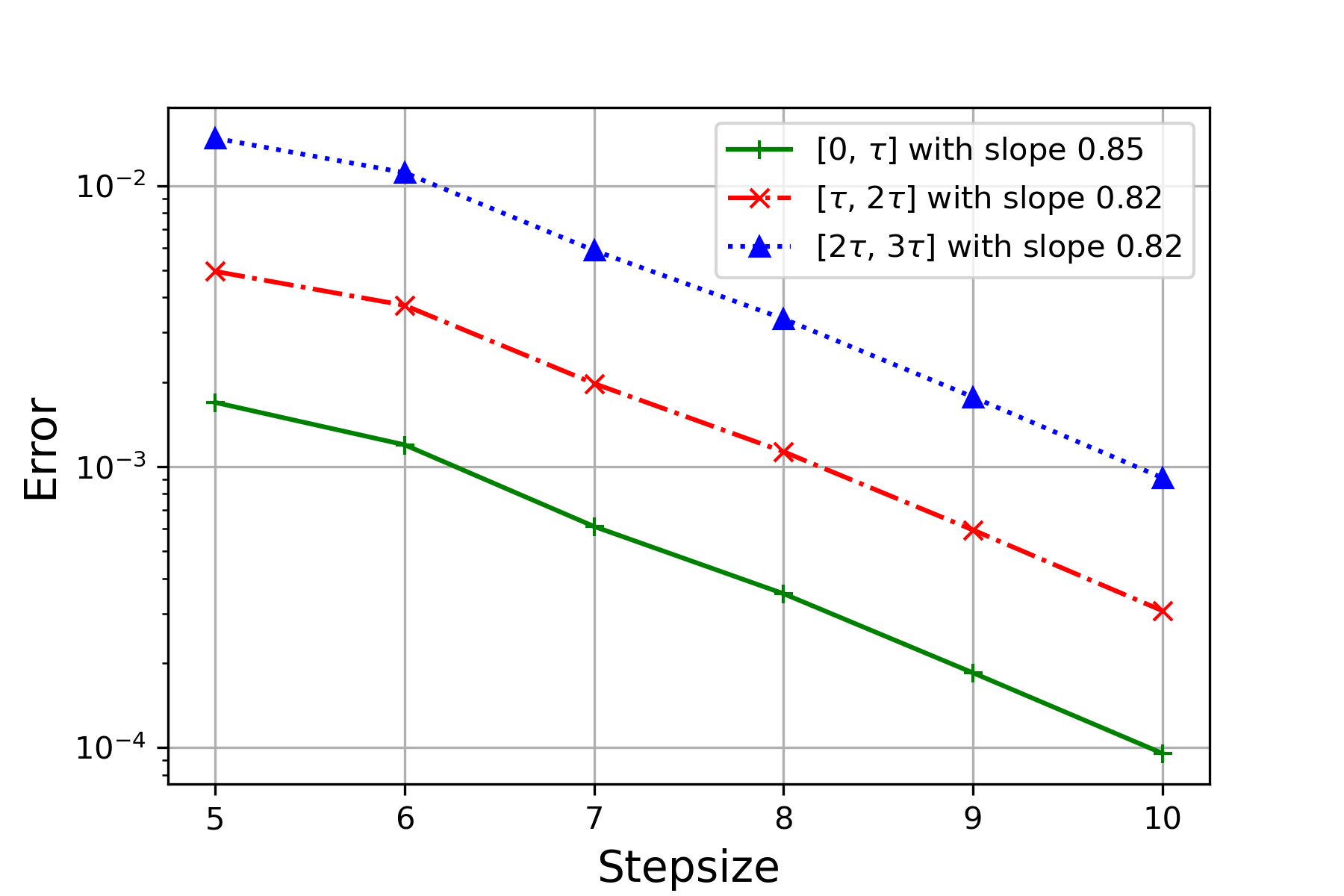}
    \caption{$(\alpha,\gamma)=(0.1,0.5)$.}
    \label{fig:alpha0.1,c0.5,new}
\end{subfigure}\\
\begin{subfigure}{0.31\textwidth}
    \centering
    \includegraphics[width=\textwidth]{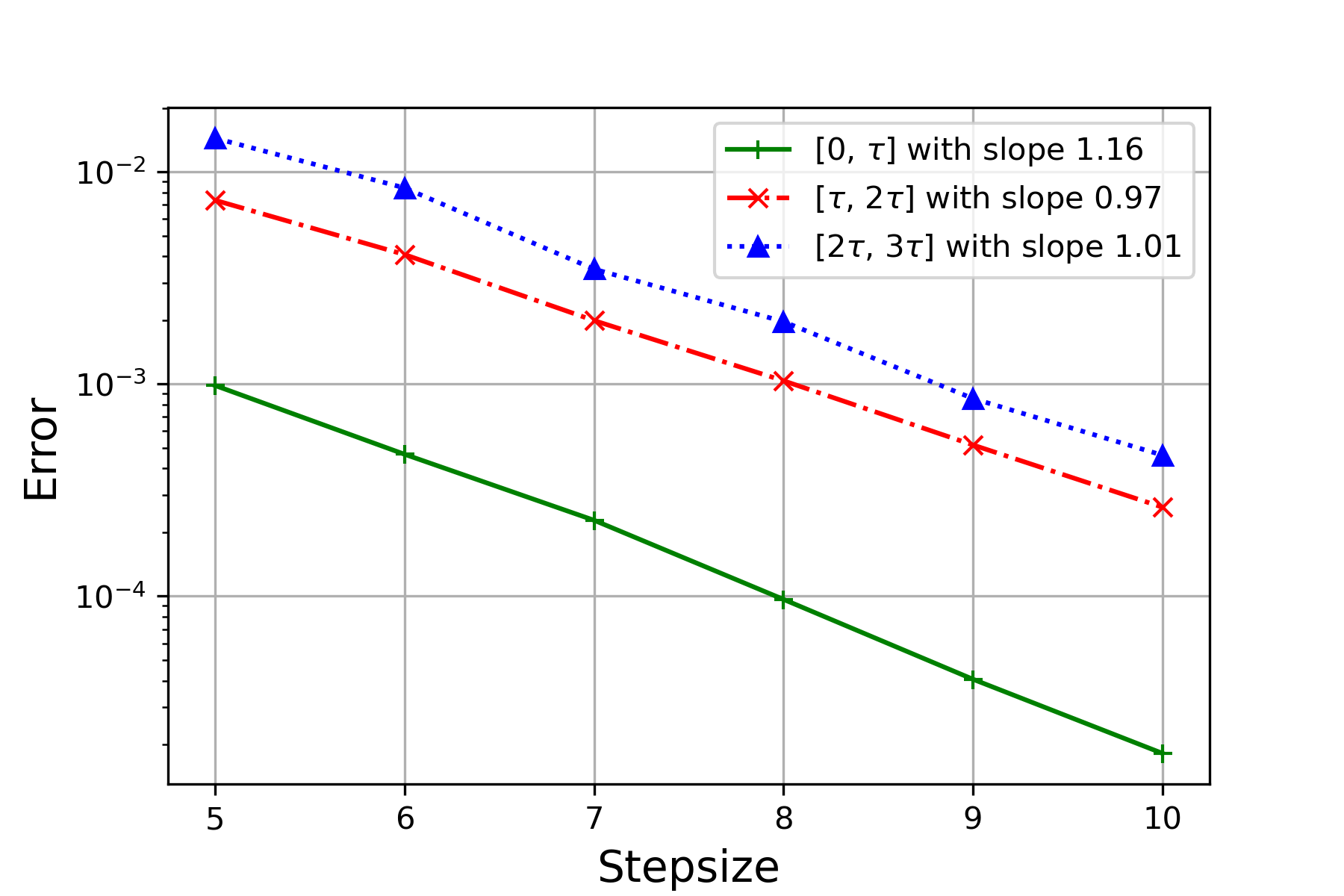}
    \caption{$(\alpha,\gamma)=(0.5,0.5)$.}
    \label{fig:alpha0.5,c0.5,new}
\end{subfigure}
\begin{subfigure}{0.31\textwidth}
    \centering
    \includegraphics[width=\textwidth]{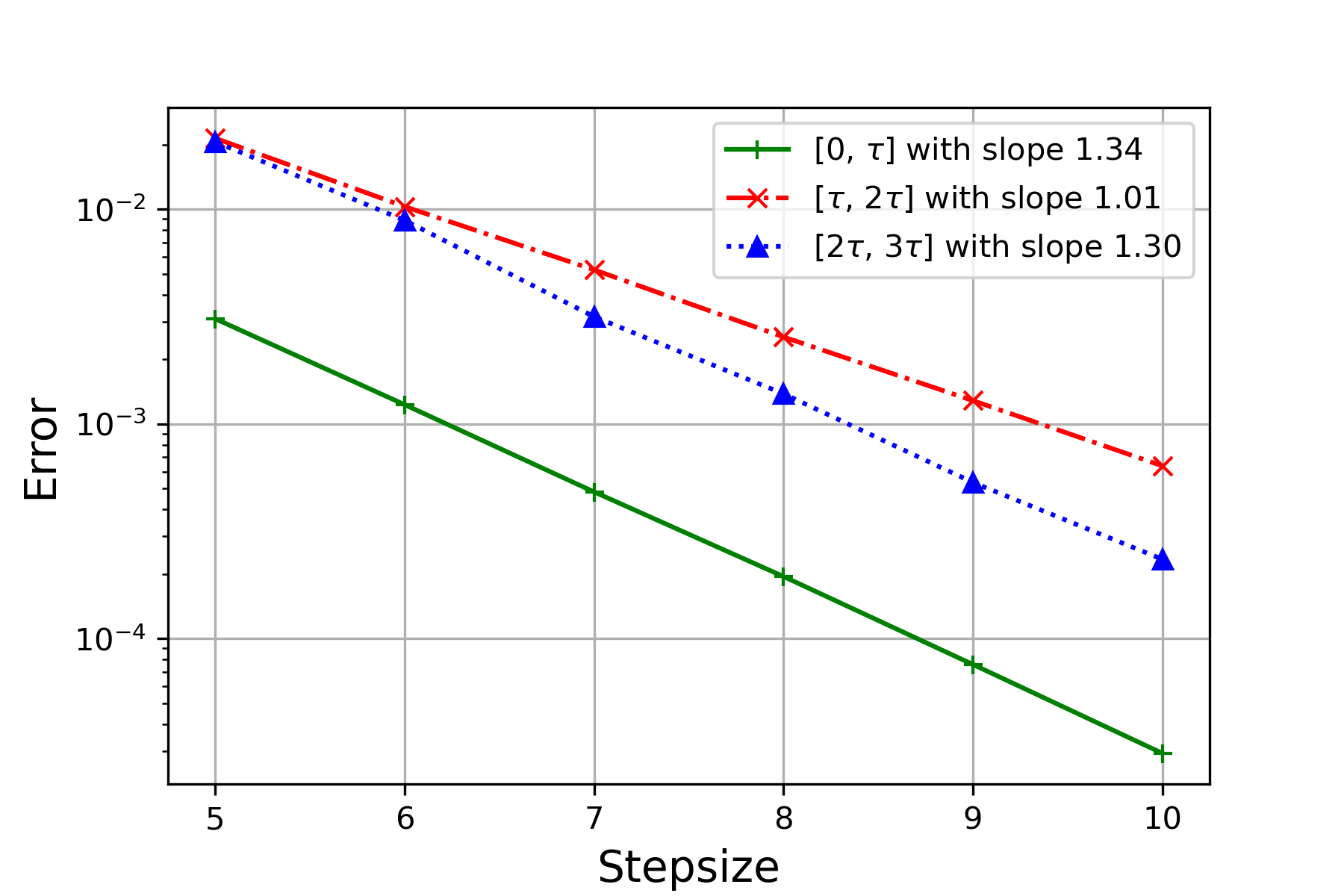}
    \caption{$(\alpha,\gamma)=(0.5,1)$.}
    \label{fig:alpha0.5,c1,new}
\end{subfigure}
\begin{subfigure}{0.31\textwidth}
    \centering
    \includegraphics[width=\textwidth]{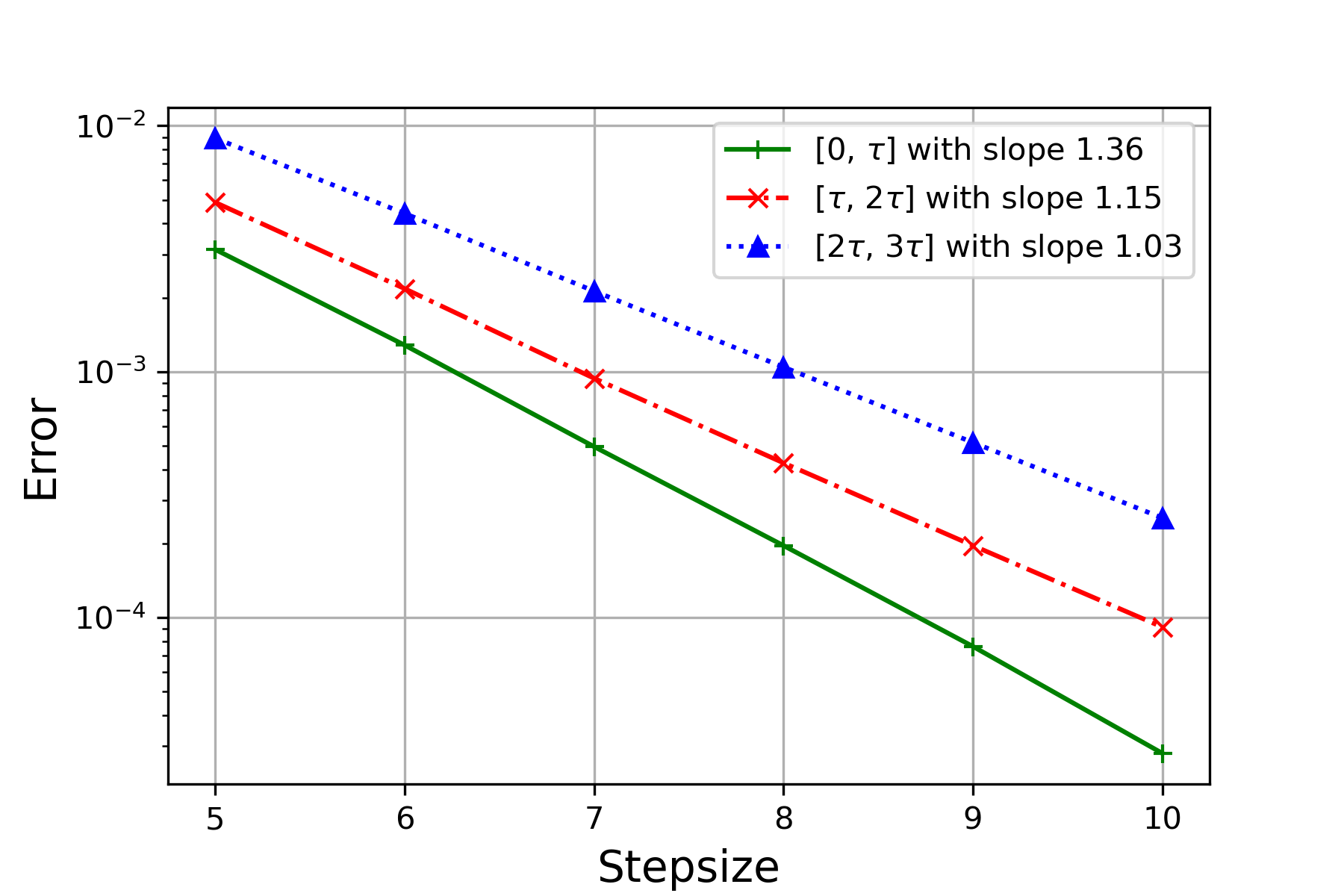}
    \caption{$(\alpha,\gamma)=(1,0.5)$.}
    \label{fig:alpha1,c0.5,new}
\end{subfigure}
\caption{The performance of solving DDE \eqref{eqn:DDEexample2} via randomized Runge-Kutta.\label{fig:newexample}}
\end{figure}

\begin{table}
\centering
\caption{Performance of solving DDE \eqref{eqn:DDEexample2} using randomized Runge-Kutta: negative MSE slopes for varying $\alpha$ and $\gamma$.}
\label{tab:DDEexample2}
\begin{tabular}{cccccccc}
\toprule
$\alpha$ & $\gamma$ & $[0,\tau]$ & $[1\tau,2\tau]$ & $[2\tau,3\tau]$ & Figure \\
\midrule
0.1 & 0.1 & 0.86 & 0.83 & 0.84 & Figure \ref{fig:alpha0.1,c0.1,new} \\
0.5 & 0.1 & 0.87 & 0.93 & 0.95 & Figure \ref{fig:alpha0.5,c0.1,new} \\
0.1 & 0.5 & 0.85 & 0.82 & 0.82 & Figure \ref{fig:alpha0.1,c0.5,new} \\
0.5 & 0.5 & 1.16 & 0.97 & 1.01 & Figure \ref{fig:alpha0.5,c0.5,new} \\
0.5 & 1 & 1.34 & 1.01 & 1.30 & Figure \ref{fig:alpha0.5,c1,new} \\
1 & 0.5 & 1.36 & 1.15 & 1.03 & Figure \ref{fig:alpha1,c0.5,new} \\
\bottomrule
\end{tabular}
\end{table}

\section{Appendix}\label{app}

For the proof of the following continuous version of Gronwall's lemma \cite[pag. 22]{PLBR} see, for example,  \cite[pag. 580]{PARRAS}.
\begin{lem}
\label{gron_cont}
    Let $f:[t_0,T]\to [0,\infty)$ be an integrable function, $\alpha,\beta:[t_0,T]\to\mathbb{R}$ be continuous on $[t_0,T]$, and let $\beta$ be non-decreasing. If for all $t\in [t_0,T]$
    \begin{equation}
        \alpha(t)\leq \beta(t)+\int\limits_{t_0}^t f(s)\alpha(s)ds,
    \end{equation}
    then
    \begin{equation}
        \alpha(t)\leq \beta(t)\exp\Bigl(\int\limits_{t_0}^t f(s)ds\Bigr).
    \end{equation}
\end{lem}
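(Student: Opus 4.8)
The plan is to prove the inequality by the classical integrating-factor method, but applied to the \emph{accumulated integral} rather than to $\alpha$ directly; this way the only property of $\beta$ I need is monotonicity, and I never have to differentiate $\beta$ (which is essential, since $\beta$ is merely assumed continuous and non-decreasing, hence possibly non-differentiable).

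First I would set $F(t):=\int_{t_0}^t f(s)\alpha(s)\,\de s$. Since $\alpha$ is continuous on the compact interval $[t_0,T]$ it is bounded, and $f$ is integrable, so $f\alpha$ is integrable and $F$ is absolutely continuous with $F(t_0)=0$ and $F'(t)=f(t)\alpha(t)$ for a.e.\ $t$. In this notation the hypothesis reads $\alpha(t)\le\beta(t)+F(t)$ for all $t\in[t_0,T]$.

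Next I would introduce the integrating factor $\mu(t):=\exp\bigl(-\int_{t_0}^t f(s)\,\de s\bigr)$, which is absolutely continuous, strictly positive, and satisfies $\mu'(t)=-f(t)\mu(t)$ a.e. The product $\mu F$ is then absolutely continuous, and a.e.\ one has
\begin{equation*}
(\mu F)'(t)=\mu(t)\bigl(F'(t)-f(t)F(t)\bigr)=\mu(t)f(t)\bigl(\alpha(t)-F(t)\bigr)\le\mu(t)f(t)\beta(t),
\end{equation*}
where the inequality uses $f\ge 0$, $\mu>0$, and $\alpha-F\le\beta$. Integrating from $t_0$ to $t$ and using $F(t_0)=0$ gives $\mu(t)F(t)\le\int_{t_0}^t \mu(s)f(s)\beta(s)\,\de s$. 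Because $\mu f\ge 0$ and $\beta$ is non-decreasing, I may replace $\beta(s)$ by $\beta(t)$ under the integral; combined with the elementary identity $\int_{t_0}^t \mu(s)f(s)\,\de s=1-\mu(t)$ (which follows by integrating $\mu f=-\mu'$ and using $\mu(t_0)=1$) this yields $\mu(t)F(t)\le\beta(t)\bigl(1-\mu(t)\bigr)$, i.e.\ $F(t)\le\beta(t)\bigl(\mu(t)^{-1}-1\bigr)$. Substituting into $\alpha(t)\le\beta(t)+F(t)$ and simplifying gives $\alpha(t)\le\beta(t)\mu(t)^{-1}=\beta(t)\exp\bigl(\int_{t_0}^t f(s)\,\de s\bigr)$, which is the claim.

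The only delicate points are measure-theoretic rather than conceptual: justifying the fundamental theorem of calculus for the absolutely continuous product $\mu F$ when $f$ is only integrable (so that $\mu'=-f\mu$ and $F'=f\alpha$ hold merely a.e.), and observing that the monotonicity-pullout step is exactly what lets me avoid the possibly nonexistent derivative of $\beta$. I would also note that no sign condition on $\beta$ is needed, since $\mu(t)^{-1}-1=\exp\bigl(\int_{t_0}^t f\bigr)-1\ge 0$, so the final substitution remains valid whether $\beta$ is positive or negative.
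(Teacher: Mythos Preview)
Your proof is correct. The paper does not actually prove this lemma; it merely cites \cite[pag.~22]{PLBR} and \cite[pag.~580]{PARRAS} for the argument, so there is no in-paper proof to compare against. What you have written is the standard integrating-factor proof of the Gronwall--Bellman inequality, carried out carefully enough to accommodate the merely integrable weight $f$ and the non-differentiable $\beta$; your handling of both points (absolute continuity of $\mu F$ to justify the product rule a.e., and the monotonicity pull-out to avoid differentiating $\beta$) is exactly right, and your closing remark that no sign assumption on $\beta$ is needed is also correct.
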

We also use the following weighted discrete version of Gronwall's inequality, see Lemma 2.1 in \cite{RKYW2017}.
\begin{lem}\label{gron_disc}  Consider two nonnegative sequences
$(u_n)_{n\in\mathbb{N}}$ and $(w_n)_{n\in\mathbb{N}}$ which for some given $a\in [0,\infty)$ satisfy
\begin{equation}
    u_n\leq a+\sum\limits_{j=1}^{n-1}w_ju_j, \quad \hbox{for all} \ n\in\mathbb{N},
\end{equation}
then for all $n\in\mathbb{N}$ it also holds true that
\begin{equation}
    u_n\leq a\exp\Bigl(\sum\limits_{j=1}^{n-1}w_j\Bigr).
\end{equation}
\end{lem}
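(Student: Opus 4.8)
The plan is to reduce the inequality to a product bound by introducing an auxiliary majorant sequence and then linearizing through the elementary estimate $1+x\le e^x$. Concretely, I would set $S_n := a+\sum_{j=1}^{n-1}w_ju_j$, so that the hypothesis reads simply $u_n\le S_n$ for every $n\in\mathbb{N}$, with the convention that the empty sum at $n=1$ gives $S_1=a$. The advantage of passing to $S_n$ is that, unlike $u_n$ itself, it satisfies a clean one-step recursion that is amenable to induction.

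First I would record the base case: at $n=1$ the sum is empty, so $u_1\le a=a\exp(0)$, which matches the claimed bound. Next I would establish the recursion $S_{n+1}\le(1+w_n)S_n$. This is the step where the nonnegativity hypotheses are used: writing $S_{n+1}=S_n+w_nu_n$ and combining $u_n\le S_n$ with $w_n\ge 0$ gives $w_nu_n\le w_nS_n$, hence $S_{n+1}\le(1+w_n)S_n$.

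A straightforward induction on this recursion then yields $S_n\le a\prod_{j=1}^{n-1}(1+w_j)$; the base case is $S_1=a$ (empty product equal to $1$), and the inductive step multiplies through by the nonnegative factor $1+w_n$. Applying the elementary bound $1+x\le e^x$ termwise, valid since each $w_j\ge 0$, converts the product into $\prod_{j=1}^{n-1}e^{w_j}=\exp\bigl(\sum_{j=1}^{n-1}w_j\bigr)$. Chaining $u_n\le S_n$ with this estimate gives exactly $u_n\le a\exp\bigl(\sum_{j=1}^{n-1}w_j\bigr)$, which is the assertion.

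I do not anticipate a genuine obstacle, since the argument is entirely elementary; the only points requiring care are the empty-sum and empty-product conventions at $n=1$ and the consistent use of the sign hypotheses ($a\ge 0$, $u_j\ge 0$, $w_j\ge 0$) to preserve the inequalities when passing from $u_n\le S_n$ to the multiplicative recursion for $S_n$. A direct induction on the desired conclusion is also possible, but bounding the weighted sum $\sum_{j=1}^{n-1}w_ju_j$ by substituting the inductive hypotheses term by term is more cumbersome than the monotone-majorant route above, so I would favor the $S_n$ approach.
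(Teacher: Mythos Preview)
Your argument is correct and is the standard proof of the discrete Gronwall inequality. The paper itself does not supply a proof of this lemma at all; it merely cites Lemma~2.1 of \cite{RKYW2017} and moves on, so your self-contained treatment actually goes beyond what the paper provides. One cosmetic remark: the inequality $1+x\le e^x$ holds for all real $x$, so you need not invoke $w_j\ge 0$ there; the nonnegativity of $w_j$ is genuinely used earlier, when passing from $u_n\le S_n$ to $w_nu_n\le w_nS_n$.
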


We use the following result concerning properties of solutions of Carath\'eodory ODEs. It follows from
\cite[Theorem 2.12]{andresgorniewicz1}  (compare also with \cite[Proposition 4.2]{RKYW2017}).
\begin{lem}
	\label{lem_ode_1}
	Let us consider the following ODE
	\begin{equation}
	\label{ODE_1_Peano}
	    z'(t)=g(t,z(t)), \quad t\in [a,b], \quad z(a)=\xi,
	\end{equation}
	where $-\infty<a<b<+\infty$, $\xi\in\R^d$ and $g:[a,b]\times\R^d\to\R^d$ satisfies the following conditions
	\begin{enumerate}[label=\textbf{(G\arabic*)},ref=(G\arabic*)]
		\item\label{ass:G1}
		for all $t\in [a,b]$ the function $g(t,\cdot):\R^d\to\R^d$ is continuous,
		\item\label{ass:G2} 
		for all $y\in\R^d$ the function $g(\cdot,y):[a,b]\to\R^d$ is Borel measurable,
		\item\label{ass:G3} 
		{there exists $K\in (0,\infty)$ such that for all $t\in [a,b]$, $y\in {\mathbb{R}^d}$}
		\begin{displaymath}
		{|g(t,y)|\leq K(1+|y|),}
		\end{displaymath}
		\item\label{ass:G4} 
    {there exists $L\in (0,\infty)$ such that for all $t\in [a,b]$, $x,y\in {\mathbb{R}^d}$}
		\begin{displaymath}
       { |g(t,x)-g(t,y)|\leq L |x-y|.}
		\end{displaymath}

	\end{enumerate}	
	Then \eqref{ODE_1_Peano} has a unique absolutely continuous solution $z:[a,b]\to\R^d$ such that
	\begin{equation}
	\label{est_sol_z}
	{\sup\limits_{t\in[a,b]}|z(t)|\leq (|\xi|+K(b-a))e^{K{(b - a)}}.
	}\end{equation}
	{Moreover, for all $t,s\in [a,b]$}
 {
	\begin{equation}
	\label{lip_sol_z}
	|z(t)-z(s)|\leq \bar K |t-s|,
	\end{equation} }
	{where $\bar K=K\cdot\Bigl(1+(|\xi|+K(b-a))e^{K(b - a)}\Bigr)$}.
\end{lem}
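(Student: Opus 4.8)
The plan is to prove Lemma \ref{lem_ode_1} (properties of Carathéodory ODEs) by combining a standard existence/uniqueness argument for ODEs with Borel-measurable, linearly-growing, Lipschitz right-hand sides with Gronwall-type estimates. First I would recast \eqref{ODE_1_Peano} in integral form, $z(t) = \xi + \int_a^t g(s,z(s))\,\de s$, which is equivalent to the ODE for absolutely continuous $z$ because $g(\cdot, z(\cdot))$ is integrable (it is Borel measurable by \ref{ass:G2}, measurable as a composition with the continuous-in-$y$ map, and dominated by $K(1+\sup|z|)$ on any bounded set by \ref{ass:G3}). Existence then follows from \cite[Theorem 2.12]{andresgorniewicz1}: conditions \ref{ass:G1}--\ref{ass:G3} give a Carathéodory right-hand side with a global $L^1$ bound on the growth, which yields a global absolutely continuous solution on $[a,b]$; uniqueness follows from the Lipschitz condition \ref{ass:G4} via Gronwall applied to $|z_1(t)-z_2(t)| \leq L\int_a^t |z_1(s)-z_2(s)|\,\de s$.

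Next I would establish the a priori bound \eqref{est_sol_z}. From the integral form and \ref{ass:G3},
\begin{equation*}
|z(t)| \leq |\xi| + \int_a^t K(1+|z(s)|)\,\de s = |\xi| + K(t-a) + K\int_a^t |z(s)|\,\de s,
\end{equation*}
so applying the continuous Gronwall Lemma \ref{gron_cont} with $\beta(t) = |\xi| + K(t-a)$ (nonnegative, continuous, non-decreasing) and $f \equiv K$ gives $|z(t)| \leq (|\xi| + K(t-a))e^{K(t-a)} \leq (|\xi| + K(b-a))e^{K(b-a)}$, which is precisely \eqref{est_sol_z}. Note Gronwall gives $\beta(t)e^{K(t-a)}$ here; one must check $\beta(t)e^{K(t-a)}$ is increasing in $t$ to bound it by its value at $t=b$, which is immediate since both factors are nonnegative and increasing.

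Finally, for the Lipschitz-in-time estimate \eqref{lip_sol_z}, for $s \leq t$ in $[a,b]$ I would write $z(t) - z(s) = \int_s^t g(r,z(r))\,\de r$, so by \ref{ass:G3} and the bound just proved,
\begin{equation*}
|z(t) - z(s)| \leq \int_s^t K(1+|z(r)|)\,\de r \leq K\Bigl(1 + (|\xi| + K(b-a))e^{K(b-a)}\Bigr)|t-s| = \bar K|t-s|,
\end{equation*}
which is exactly the claimed bound with the stated $\bar K$. The only genuinely non-routine ingredient is invoking the external existence result \cite[Theorem 2.12]{andresgorniewicz1} correctly and checking its hypotheses are met by \ref{ass:G1}--\ref{ass:G3}; everything else is a direct application of the two Gronwall lemmas already recorded in the appendix. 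So the main obstacle, such as it is, is bookkeeping: making sure the compositions $g(\cdot,z(\cdot))$ are genuinely measurable and integrable so the integral formulation is legitimate, and that the Gronwall hypotheses (continuity, monotonicity of $\beta$) are verified before applying Lemma \ref{gron_cont}.
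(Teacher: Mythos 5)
Your proposal is correct and follows essentially the same route as the paper: the paper's own proof simply defers to Lemma 7.1 of \cite{difonzo2024existence} (existence via the same \cite[Theorem 2.12]{andresgorniewicz1} citation, bounds via Gronwall), and the single modified estimate it displays, $|z(t)-z(s)|\leq K\bigl(1+\sup_{a\leq t\leq b}|z(t)|\bigr)|t-s|$, is exactly your final Lipschitz step obtained from the integral representation and \ref{ass:G3}. Your write-up just carries out in full (integral form, Gronwall a priori bound, Lipschitz-in-time estimate) what the paper handles by reference, so there is nothing to correct.
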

{
\begin{proof}
    The proof for our proposition follows the approach outlined in Lemma 7.1 of \cite{difonzo2024existence}. This is feasible because our assumptions (G3) and (G4) are stronger than \cite[Assumptions (G3) and (G4)]{difonzo2024existence}. Particularly, \cite[Eqn. (7.8) and Eqn. (7.9)]{difonzo2024existence} will undergo modifications as follows due to the strengthened conditions:
\begin{align}
\begin{split}
  |z(t)-z(s)| 
\leq K(1+\sup\limits_{a\leq t\leq b}|z(t)|)|t-s|.  
\end{split}
\end{align}
Consequently, this gives rise to our refined conclusions.
\end{proof}
}

\bibliographystyle{plain}
\bibliography{sample}
\end{document}